\newtheorem{thm}{Theorem}[section]
\newtheorem{cor}[thm]{Corollary}
\newtheorem{lem}[thm]{Lemma}
\newtheorem{prop}[thm]{Proposition}
\theoremstyle{definition}
\newtheorem{rem}{Remark}[section]
\newcommand{\R}{{\mathbb R}}
\newcommand{\Q}{{\mathbb Q}}
\newcommand{\C}{{\mathbb C}}
\newcommand{\Z}{{\mathbb Z}}
\newcommand{\calE}{{\mathcal E}}
\newcommand{\calG}{{\mathcal G}}
\newcommand{\calH}{{\mathcal H}}
\newcommand{\calR}{{\mathcal R}}
\renewcommand{\to}{\longrightarrow}
\newcommand{\ev}{\operatorname{ev}}
\newcommand{\ad}{{\operatorname{ad\,}}}
\newcommand{\weight}{(1+\Delta)^s}
\newcommand{\weightinv}{(1+\Delta)^{-s}}
\newcommand{\con}[2]{\nabla^{#1}_{#2}}
\newsavebox{\savepar}
\newcommand{\ip}[1]{\langle #1 \rangle}
\numberwithin{equation}{section}
\newcounter{labelflag} \setcounter{labelflag}{0}
\newcommand{\labelon}{\setcounter{labelflag}{1}}
\newcommand{\Label}[1]{
                       \ifnum\thelabelflag=1
                          \ifmmode
                             \makebox[0in][l]{\qquad\fbox{\rm#1}}
                          \else
                             \marginpar{\vspace{0.7\baselineskip}
                                        \hspace{-1.1\textwidth}
                                        \fbox{\rm#1}}
                          \fi
                       \fi
                       \label{#1}
                      }
 \newcommand{\BbC}{{\mathbb C}}
 \newcommand{\BbZ}{{\mathbb Z}}
 \newcommand{\pdo}{\Psi{\rm DO}}
 \newcommand{\calg}{{\mathfrak g}}
 \newcommand{\e}{\varepsilon}
 \newcommand{\eee}{e^{i(\theta-\theta')\cdot\xi}}
 \newcommand{\dg}{\dot\gamma}
 \newcommand{\ch}[3]{\Gamma_{{#1}{#2}}^{{#3}}}
 \newcommand{\cc}[3]{c_{{#1}{#2}}^{{#3}}}
 \newcommand{\ints}{\int_{S^1}}
 \newcommand{\dtau}{\frac{\partial}{\partial\tau}\left|_{_{_{_{_{_{\tau =
 	      0}}}}}} }
 \newcommand{\ptau}[1]{\partial_\tau^{#1}}
 \newcommand{\xii}[1]{ (\xi^2)^{#1} }
 \newcommand{\eff}[1]{e^{i{#1}\theta}}
 \newcommand{\dir}{\partial\kern-.570em /}
 \newcommand{\dire}{\partial\kern-.570em /{}^{\rm eq}}
 \newcommand{\pa}{\partial}
 \newcommand{\cch}[2]{\Gamma_{{#1}}^{{#2}}}
 \newcommand{\xly}{(X\leftrightarrow Y)}
 \newcommand{\chw}[3]{\Gamma_{{#1}{#2}}^{{#3}} }
 \newcommand{\wgti}{(1+\Delta)^{-1}}
 \newcommand{\wgt}{1+\Delta}
 \newcommand{\ndg}{\nabla_{\dot\gamma}}
 \newcommand{\wgtsi}{(1+\Delta)^{-s}}
 \newcommand{\wgts}{(1+\Delta)^s}
 \newcommand{\ips}[2]{\langle {#1},{#2}\rangle_{s}}
 \newcommand{\ipo}[2]{\langle {#1},{#2}\rangle_{0}}
\begin{document}



\title[The Geometry of Loop Spaces I:  $ {\rm H^s}$-Riemannian Metrics]
{The Geometry of Loop Spaces I:  ${\bf H^s}$-Riemannian Metrics}
\author[Y. Maeda]{Yoshiaki Maeda}
\address{Department of Mathematics\\
Keio University}
\email{maeda@math.keio.ac.jp}
\author[S. Rosenberg]{Steven Rosenberg}
\address{Department of Mathematics and Statistics\\
  Boston University}
\email{sr@math.bu.edu}
\author[F. Torres-Ardila]{Fabi\'an Torres-Ardila}
\address{Center of Science and Mathematics in Context\\
 University of Massachusetts Boston}
\email{fabian.torres-ardila@umb.edu}

\begin{abstract}   A Riemannian metric on a manifold $M$ induces a family of
  Riemannian metrics on the loop space $LM$ depending on a Sobolev space
  parameter $s$.   We compute the connection forms  of these metrics and the higher symbols of their curvature forms, which
  take values in
  pseudodifferential operators ($\Psi$DOs).  These calculations are used in the followup paper \cite{MRT2}
  to construct Chern-Simons classes on $TLM$ which 
detect nontrivial elements in the diffeomorphism group of certain Sasakian $5$-manifolds associated
 to K\"ahler surfaces. 

  \end{abstract}

\maketitle

 \centerline{Dedicated to the memory of Prof. Shoshichi Kobayashi}

\bigskip\bigskip

\bigskip
\section{{\bf Introduction}}

The loop space $LM$\let\thefootnote\relax\footnote{MSC number: 58J40.  Keywords:  loop spaces, Levi-Civita connections, pseudodifferential operators.}
 of a manifold $M$ appears frequently in mathematics and
 mathematical physics. 
In this paper, we develop
 the Riemannian geometry of loop spaces. In a companion paper \cite{MRT2}, we describe a computable theory of characteristic classes for the tangent bundle $TLM$. 
 
Several new features appear for Riemannian geometry on the infinite dimensional manifold $LM$. 
First, for a fixed Riemannian metric $g$ on $M$, there is a natural one-parameter family of metrics
$g^s$  on $LM$ associated to a Sobolev parameter $s >0$.
Our main goal is to compute
 the Levi-Civita connection for $g^s$ and the associated connection and curvature forms.  
 For $s=0$, this is the usual $L^2$ metric, whose connection one-form and curvature two-form are essentially the same as the corresponding forms for $g$.  Second,
for $s >0$ these forms take values in zeroth order pseudodifferential operators ($\pdo$s) acting on sections of a trivial bundle over the circle.  
 Thus the analysis of these $\pdo$s and their symbols is essential to understanding the geometry of $LM$.  In contrast to the finite dimensional case, where a Riemannian metric on $M$ implements a {\it reduction} of the structure group of $TM$, $g^s$ is not compatible with the structure group of $TLM$. This forces an {\it extension} of the structure group of $TLM$ from a gauge group to a group of invertible zeroth order $\pdo$s.
 

The paper is organized as follows.   \S2 discusses connections associated to $g^s.$  After some preliminary material on $\pdo$s,
we compute the Levi-Civita connection for $s=0$ (Lemma \ref{lem:l2lc}), $s=1$
(Theorem \ref{old1.6}),  $s\in \Z^+$ (Theorem \ref{thm:sinz}), and general 
$s>\frac{1}{2}$ (Theorem \ref{thm25}).  The extension of the structure group is discussed in \S2.6.
 In \S3, we show that our results extend work of Freed and Larrain-Hubach on loop groups \cite{Freed, andres}.
In the Appendix,
we compute the higher order symbols of the connection and curvature  forms 
of these connections.

One main motivation for this paper is the construction of characteristic and secondary classes on $TLM$, using the Wodzicki residue of $\pdo$s.  In the companion paper \cite{MRT2}, 
we use the main theorems in this paper to construct Chern-Simons classes on $TLM$ which detect that
$\pi_1({\rm Diff}(M^5))$ is infinite, where ${\rm Diff}(M^5)$ is the diffeomorphism group of 
infinite families of Sasakian $5$-manifolds associated to integral K\"ahler surfaces. 
Thus this work relates to a major theme in Prof.~Kobayashi's many important papers, namely the relationship between Riemannian and complex geometry.

Our many discussions with Sylvie Paycha are gratefully
acknowledged. We also thank  Dan Freed for pointing out an error in  an earlier  version of 
the paper.

\section{{\bf The Levi-Civita Connection for Sobolev Parameter $s\geq 0$}}
\label{LCconnection}

In this section, we compute the Levi-Civita connection on $LM$
associated to a Riemannian metric on $M$ and a Sobolev parameter $s=0$ 
or $s>\frac{1}{2}.$  The standard $L^2$ metric on $LM$ is the case $s=0$, and otherwise we avoid technical issues by assuming that $s$ is greater than the critical exponent $\frac{1}{2}$ for analysis on bundles over $S^1.$  
  The
main results are  Lemma \ref{lem:l2lc}, Theorem \ref{old1.6}, 
Theorem \ref{thm:sinz}, and Theorem \ref{thm25},
which compute the Levi-Civita connection for $s =0$, $s=1$, 
$s\in \Z^+$,  and general $s >\frac{1}{2},$ respectively.

     
\subsection{{\bf Preliminaries on $LM$}}

${}$
\medskip

Let $(M, \langle\ ,\ \rangle)$ 
be a closed, connected,  oriented Riemannian $n$-manifold with  loop space $LM
= C^\infty(S^1,M)$ of smooth loops. 
$LM$ is a smooth infinite dimensional Fr\'echet manifold, but it is
 technically simpler 
to work 
with the smooth Hilbert manifold $H^{s'}(S^1,M)$ of loops in some Sobolev class $s' \gg 0,$
as we now recall. For $\gamma\in LM$, the formal
tangent space $T_\gamma LM$ is 
$\Gamma(\gamma^*TM)$, the space
 of smooth sections of the pullback bundle $\gamma^*TM\to
S^1$.   The actual tangent space of $H^{s'}(S^1, M)$ at $\gamma$ is 
$H^{s'-1}(\gamma^*TM),$     the sections of $\gamma^*TM$ of Sobolev class $s'-1.$
We will fix $s'$ and use $LM, T_\gamma LM$ for $H^{s'}(S^1, M), H^{s'-1}(\gamma^*TM)$, respectively.

For each $s>1/2,$ we can complete $\Gamma(\gamma^*TM\otimes \C)$
 with respect to the Sobolev inner product
 \begin{equation}\label{eq:Sob1}
\langle X,Y\rangle_{s}=\frac{1}{2\pi}\int_0^{2\pi} \langle(1+\Delta)^{s}
X(\theta),Y(\theta)
\rangle_{\gamma (\theta)}d\theta,\  X,Y\in \Gamma(\gamma^*TM).
\end{equation}
Here $\Delta=D^*D$, with $D=D/d\gamma$ the covariant derivative along
$\gamma$. (We use this notation instead of the classical $D/d\theta$ to keep track
of $\gamma$.)
We need the complexified pullback bundle $\gamma^*TM\otimes \BbC$, denoted from now on
just as $\gamma^*TM$, in order to apply the
pseudodifferential operator  $(1+\Delta)^{s}.$
The construction of $(1+\Delta)^{s}$ is reviewed in
\S\ref{pdoreview}.  
By the basic elliptic estimate, the  completion of  $\gamma^*TM$ with respect to 
(\ref{eq:Sob1}) is  $H^{s}(\gamma^*TM)$.   We can consider the
$s$ metric on $TLM$ for any $s\in \R$, but we will only consider 
$s=0$ or $1/2 < s\leq s'-1.$

A small real neighborhood $U_\gamma$ 
of the zero section in $H^{s'-1}(\gamma^*TM)$ is a
coordinate chart near $\gamma\in LM$ 
via the pointwise exponential map
\begin{equation}\label{pointwiseexp}
\exp_\gamma:U_\gamma
\to L M, \ X \mapsto 
\left(\theta\mapsto \exp_{\gamma(\theta)} X(\theta)\right).  
\end{equation}
The differentiability of the transition functions $\exp_{\gamma_1}^{-1}\cdot
\exp_{\gamma_2}$ is proved in
\cite{E} and \cite[Appendix A]{Freed1}.
Here $\gamma_1, \gamma_2$ are close loops in the sense that
a geodesically convex neighborhood of $\gamma_1(\theta)$ contains
$\gamma_2(\theta)$ and vice versa for all $\theta.$
Since 
$\gamma^*TM$
is (noncanonically) isomorphic to the trivial bundle ${\mathcal R} =
S^1\times \BbC^n\to S^1$, 
the model space for $LM$ is the set of 
$H^{s'}$ sections of this trivial bundle.  The $s$ metric is a weak Riemannian metric for $s<s'-1$ in the sense that the topology induced on $H^{s'}(S^1, M)$ by the exponential map applied to $H^{s}(\gamma^*TM)$ is weaker than the $H^{s'}$ topology.

The complexified tangent bundle
$TLM$ has
transition functions $d(\exp_{\gamma_1}^{-1} 
\circ \exp_{\gamma_2})$.  Under
the isomorphisms $\gamma_1^*TM \simeq {\mathcal R} \simeq
\gamma_2^*TM$, the transition functions lie in the gauge group
${\calG}({\mathcal R})$, so this is the structure group of $TLM.$


\subsection{{\bf Review of $\pdo$ Calculus}}\label{pdoreview}
${}$
\medskip

We recall the construction of classical
pseudodifferential operators ($\pdo$s)
 on a closed 
manifold $M$ from \cite{gilkey, See}, assuming knowledge of $\pdo$s on
 $\R^n$ (see e.g. \cite{hor, shu}).   
 

 A linear operator $P:C^\infty(M)\to C^\infty(M)$
is a $\pdo$ of  order $d$ if for
every open chart $U\subset M$ and functions $\phi,\psi\in C_c^\infty(U)$,
$\phi P\psi$ is a $\pdo$ of order $d$ on $\R^n$, where we do not
 distinguish between $U$ and its diffeomorphic image in $\R^n$.
 Let
  $\{U_i\}$ be a finite cover of $M$
with subordinate partition of unity
  $\{\phi_i\}.$  Let $\psi_i\in C^\infty_c(U_i)$ have $\psi_i \equiv 1$ on
  supp$(\phi_i)$  and set $P_i = \psi_iP\phi_i.$
Then
  $ \sum_i \phi_iP_i\psi_i$ is a $\pdo$ on $M$, and
$P$ differs from $ \sum_i \phi_iP_i\psi_i$ by a smoothing operator, denoted
$P\sim \sum_i \phi_iP_i\psi_i$. 
 In particular, this sum is independent of the
  choices up to smoothing operators.
All this carries over to $\pdo$s acting on sections of a bundle over $M$.

An example is the $\pdo$ \ $(1+\Delta-\lambda)^{-1}$ for $\Delta$ a positive
order nonnegative elliptic $\pdo$ and $\lambda$
outside the spectrum of $1+\Delta.$  In each $U_i$, we construct a parametrix
$P_i$ for $A_i = \psi_i
(1+\Delta-\lambda)\phi_i$ by formally inverting $\sigma(A_i)$ and then
constructing a $\pdo$ with the inverted symbol.  By \cite[App.~A]{A-B2},
$ B = \sum_i \phi_iP_i\psi_i$ is a parametrix for $(1+\Delta-\lambda)^{-1}$.
  Since
  $B \sim (1+\Delta-\lambda)^{-1}$,
  $(1+\Delta-\lambda)^{-1}$ is itself a $\pdo$.
For $x\in U_i$, by definition
$$\sigma((1+\Delta-\lambda)^{-1})(x,\xi) = \sigma(P)(x,\xi) = \sigma(\phi
P\phi)(x,\xi),$$
where $\phi$ is a bump function with $\phi(x) = 1$ \cite[p.~29]{gilkey};  
the symbol depends on the choice of $(U_i, \phi_i).$  

The operator $\weight$ for Re$(s) <0$,
which exists as a bounded
operator on $L^2(M)$ by the functional calculus, is also a $\pdo$.
To see this,
 we construct the putative symbol $\sigma_i$
of $ \psi_i\weight\phi_i$ in each $U_i$
by a contour integral $\int_\Gamma \lambda^{s}\sigma[(1+\Delta-\lambda)^{-1}]
d\lambda$
around the spectrum of $1+\Delta$.
We then
construct a $\pdo$ $Q_i$ on $U_i$ with $\sigma(Q_i) = \sigma_i$,
and set $Q  = \sum_i\phi_i Q_i\psi_i.$
By arguments in \cite{See},
$\weight \sim Q$,
so $\weight$ is a $\pdo$.

\subsection{The Levi-Civita Connection for $s=0, 1$}
${}$
\medskip

The smooth Riemannian manifold $LM = H^{s'}(S^1,M)$ has tangent bundle $TLM$ with 
$T_\gamma LM = H^{s'-1}(\gamma^*TM).$  For the $s'-1$ metric on $TLM$ (i.e., 
$s = s'-1$ in (\ref{eq:Sob1})), 
the 
Levi-Civita connection exists and is determined by the six term formula
\begin{eqnarray}\label{5one}
2\ip{\con{s}{X}Y,Z}_{s} &=& X\ip{Y,Z}_{s}+Y\ip{X,Z}_{s}-Z\ip{X,Y}_{s}\\
&&\qquad +\ip{[X,Y],Z}_{s}+\ip{[Z,X],Y}_{s}-\ip{[Y,Z],X}_s\nonumber
\end{eqnarray}
\cite[Ch. VIII]{lang}.  The point is that each term on the RHS of (\ref{5one}) 
is
a {\it continuous} linear functional $T_i:H^{s=s'-1}(\gamma^*TM) \to \BbC$ in $Z$.  Thus 
$T_i(Z) = \ip{T_i'(X,Y),Z}_s$ for a unique $T'(X,Y)\in H^{s'-1}(\gamma^*TM)$, and $\con{s}{Y}X
= \frac{1}{2}\sum_i T'_i.$  

In general, the Sobolev parameter $s$ in (\ref{eq:Sob1}) differs from the parameter $s'$ defining the loop space.  We discuss how this affects the existence of a Levi-Civita connection. 

\begin{rem}\label{lcrem}  For general $s >\frac{1}{2}$, the Levi-Civita connection for the $H^s$ 
metric is guaranteed to exist on the bundle $H^s(\gamma^*TM)$, as above.  However, it is inconvenient to have the bundle depend on the Sobolev parameter, for several reasons:  
(i) $H^s(\gamma^*TM)$ is strictly speaking not the tangent bundle of $LM$, (ii) for the
$L^2$ ($s=0$) metric, the Levi-Civita connection should be given by the Levi-Civita connection on $M$ applied pointwise along the loop (see Lemma \ref{lem:l2lc}), and on $L^2(\gamma^*TM)$  this would have to be interpreted in the distributional sense; (iii) to compute Chern-Simons classes on
$LM$ in \cite{MRT2}, we need to compute with a pair of connections corresponding to $s=0, s=1$ on the
same bundle.  These problems are not fatal: (i) and (ii) are essentially  aesthetic issues,
and for (iii), the connection one-forms will take values in zeroth order $\pdo$s, which are bounded operators on any 
$H^{s'-1}(\gamma^*TM)$, so $s' \gg 0$ can be fixed.  

Thus it is more convenient
to fix $s'$ and consider the family of $H^s$ metrics on $TLM$ for 
$\frac{1}{2} < s < s'-1$. 
  However, the existence of the Levi-Civita connection for the $H^s$ metric is trickier.
  For a sequence $Z\in H^{s'-1} = H^{s'-1}(\gamma^*TM)$ with $Z\to 0$
  in $H^{s'-1}$ or in 
  $H^s$, the RHS of (\ref{5one}) goes to $0$ for fixed $X, Y\in H^s.$  Since
  $H^{s'-1}$ is dense in $H^{s}$, the RHS of (\ref{5one}) extends to a continuous linear functional on $H^s$.  Thus the RHS of (\ref{5one}) is given by
  $\langle L(X,Y), Z\rangle_s$ for some $L(X,Y)\in H^s.$  We set $\nabla^{s}_YX = 
  \frac{1}{2}L(X,Y)$.  Note that even if we naturally demand that
  $X, Y\in H^{s'-1}$, we only get $\nabla^s_YX\in H^s\supset H^{s'-1}$ without additional work.  Part of the content of Theorem \ref{thm25} is that the Levi-Civita connection exists in the {\it strong sense}:  given a tangent vector $X\in H^{s'-1}(\gamma^*TM)$ and a smooth vector field
$Y_\eta\in H^{s'-1}(\eta^*TM)$ for all $\eta$,
   $\nabla^s_XY(\gamma)\in H^{s'-1}(\gamma^*TM).$  See Remark 2.6.
  

\end{rem}

We need to discuss local coordinates on $LM$.
For motivation, recall that
\begin{equation}\label{lie}[X,Y]^a = X(Y^a)\partial_a - Y(X^a)\partial_a
\equiv \delta_X(Y) -\delta_Y(X)
\end{equation}
in local coordinates on a finite dimensional manifold.  Note that
$X^i\partial_iY^a = X(Y^a) =
(\delta_XY)^a$ in this notation.

Let $Y$ be a vector field on $LM$, and let $X$ be a tangent vector at
$\gamma\in LM.$  The local variation $\delta_XY$ of $Y$ in the direction of $X$ at $\gamma$ is 
defined as usual: let $\gamma(\e,\theta)$ be a family of loops in $M$
with $\gamma(0,\theta) = \gamma(\theta), \frac{d}{d\e}|_{_{\e=0}}
\gamma(\e,\theta) = X(\theta).$  Fix $\theta$, and let $(x^a)$ be
coordinates near $\gamma(\theta)$.  We call these coordinates 
{\it manifold coordinates.} Then
$$\delta_XY^a(\gamma)(\theta) \stackrel{{\rm def}}{=}
\frac{d}{d\e}\biggl|_{_{_{\e =0}}} Y^a(\gamma(\e,\theta)).$$
Note that $\delta_XY^a = (\delta_XY)^a$ by definition.

\begin{rem} Having $(x^a)$ defined only near a fixed $\gamma(\theta)$ is inconvenient.
We can find coordinates that work for all points of $\gamma(\theta)$ as follows. For
  fixed $\gamma$, there is an $\e$ such that for all $\theta$,
  $\exp_{\gamma(\theta)} X$ is inside the cut locus of $\gamma(\theta)$ if
  $X\in T_{\gamma(\theta)}M$ has $|X|<\e.$  Fix such an $\e.$ Call 
  $X\in H^{s'-1}(\gamma^*TM)$ {\it
  short} if $|X(\theta)|<\e$ for all $\theta.$  Then
$$U_\gamma = \{\theta \mapsto \exp_{\gamma(\theta)}X(\theta) | X\ {\rm is\
    short}\}\subset LM$$
is a coordinate neighborhood of $\gamma$ parametrized by $\{ X: X\  {\rm is\ 
    short}\}.$  

 We know
$H^{s'-1}(\gamma^*TM) \simeq H^{s'-1}(S^1\times \R^n)$ noncanonically, so
$U_\gamma$ is parametized by short sections of $H^{s'-1}(S^1\times \R^n)$ for
a different $\e.$  In particular, we have a smooth diffeomorphism $\beta$ from
$U_\gamma$ to short sections of $H^{s'-1}(S^1\times \R^n)$.

Put coordinates $(x^a)$ on $\R^n$, which we identify canonically with the fiber $\R^n_\theta$
over $\theta$ in $S^1\times \R^n$. For $\eta\in
U_\gamma$, we have $\beta(\eta) = (\beta(\eta)^1(\theta),...,\beta(\eta)^n(\theta)).$
As with finite dimensional coordinate systems, we will drop $\beta$ and just
write
$\eta = (\eta(\theta)^a).$ These coordinates work for all
$\eta$ near $\gamma$ and for all $\theta.$  The definition of $\delta_XY$ above carries over to exponential coordinates.

We will call these coordinates {\it exponential coordinates}.
\end{rem}

(\ref{lie}) continues to hold
 for vector fields on $LM$, in either 
 manifold or exponential coordinates.
   To see this, one checks that the coordinate-free proof that $L_XY(f) =
 [X,Y](f)$ for $f\in C^\infty(M)$ (e.g.~\cite[p.~70]{warner}) carries over to
 functions on $LM$.  In brief, the usual proof involves a map $H(s,t)$ of a
 neighborhood of the origin in $\R^2$ into $M$, where $s,t$ are parameters for
 the flows of $X, Y,$ resp.  For $LM$, we have a map $H(s,t,\theta)$, where
 $\theta$ is the loop parameter.  
The  usual proof uses
 only $s, t$ differentiations,
 so $\theta$ is unaffected.    The point is that the $Y^i$ are local functions
 on the $(s,t,\theta)$  parameter space, whereas
the $Y^i$ are not
  local functions on $M$ at points where loops cross or self-intersect.

We first compute the $L^2$ ($s=0$) Levi-Civita connection invariantly and in 
manifold coordinates.

\begin{lem} \label{lem:l2lc} Let $\nabla^{LC}$ be the Levi-Civita connection on $M$.  
 Let $\ev_\theta:LM\to M$ be $\ev_\theta(\gamma) = \gamma(\theta).$ 
 Then $D_XY(\gamma)(\theta) \stackrel{\rm def}{=} 
 (\ev_\theta^*\nabla^{LC})_XY(\gamma)(\theta)$ is the $L^2$ Levi-Civita connection on $LM$.  In manifold coordinates,
 \begin{equation}\label{l2lc} (D_XY)^a(\gamma)(\theta) = \delta_XY^a(\gamma)(\theta) +
  \cch{bc}{a}(\gamma(\theta))X^b(\gamma)(\theta) Y^c(\gamma)(\theta).
  \end{equation}
\end{lem}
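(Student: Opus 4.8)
The plan is to establish the coordinate formula (\ref{l2lc}) first, and then to verify that the operator $D$ so defined is metric-compatible and torsion-free; uniqueness of such a connection --- forced by the six-term formula (\ref{5one}) with $s=0$, which expresses any candidate purely in terms of the metric --- then identifies $D$ as the $L^2$ Levi-Civita connection. First I would unwind the definition of the pullback connection $\ev_\theta^*\nlc$. A tangent vector $X$ at $\gamma$ is represented by a variation $\gamma(\e,\theta)$, and pushing forward under $\ev_\theta$ gives $(\ev_\theta)_*X = X(\theta)\in T_{\gamma(\theta)}M$, while the vector field $Y$ evaluated at $\theta$ is a section of $\ev_\theta^*TM$ with components $Y^a(\gamma)(\theta)$. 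The standard local expression for a pullback connection then reads $(D_XY)^a(\gamma)(\theta) = X(Y^a)(\gamma)(\theta) + \cch{bc}{a}(\gamma(\theta))\,(X\ev_\theta^b)\,Y^c(\gamma)(\theta)$. Since $X\ev_\theta^b = X^b(\gamma)(\theta)$ and $X(Y^a) = \delta_XY^a$ by the definition of $\delta_X$, this is exactly (\ref{l2lc}).

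With (\ref{l2lc}) in hand, torsion-freeness is immediate: subtracting the formula for $(D_YX)^a$ from $(D_XY)^a$ gives $\delta_XY^a - \delta_YX^a + \cch{bc}{a}(X^bY^c - Y^bX^c)$. The quadratic term vanishes because $\cch{bc}{a} = \cch{cb}{a}$ ($\nlc$ is torsion-free on $M$), and the remaining difference equals $[X,Y]^a$ by (\ref{lie}), which has already been noted to persist for vector fields on $LM$. For metric compatibility I would differentiate $\ipo{Y}{Z} = \frac{1}{2\pi}\int_0^{2\pi}\langle Y(\theta),Z(\theta)\rangle_{\gamma(\theta)}\,d\theta$ along the variation $\gamma(\e,\cdot)$ representing $X$. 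Differentiating under the integral sign and applying metric compatibility of $\nlc$ at each fixed $\theta$ to the curve $\e\mapsto\gamma(\e,\theta)$ turns the integrand into $\langle(D_XY)(\theta),Z(\theta)\rangle + \langle Y(\theta),(D_XZ)(\theta)\rangle$, since $\nlc_{X(\theta)}$ is precisely $D_X$ evaluated at $\theta$; integrating back yields $X\ipo{Y}{Z} = \ipo{D_XY}{Z} + \ipo{Y}{D_XZ}$.

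The only genuine point requiring care --- the main obstacle, such as it is --- is the legitimacy of differentiating under $\int d\theta$ and the claim that $D_XY$ again lies in $H^{s'-1}(\gamma^*TM)$ rather than in a weaker space (note the $L^2$ metric here is weak, so existence of a compatible connection is not automatic and must come from this explicit construction). The former follows from smooth dependence of $\gamma(\e,\theta)$ on $(\e,\theta)$ together with compactness of $S^1$; the latter holds because $\delta_XY^a\in H^{s'-1}$ and, for $s'-1>\frac{1}{2}$, the pointwise product $\cch{bc}{a}(\gamma(\theta))X^bY^c$ is again $H^{s'-1}$ by the Sobolev multiplication property. Conceptually the lemma simply records that a connection assembled pointwise from $\nlc$ along the loop, paired through the integrated ($L^2$) metric, inherits both defining properties of a Levi-Civita connection from $M$, so no essentially infinite-dimensional difficulty arises at $s=0$.
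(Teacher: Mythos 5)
Your proposal is correct and takes essentially the same route as the paper: unwind the pullback connection $\mathrm{ev}_\theta^*\nlc$ to obtain the local formula (\ref{l2lc}), read off torsion-freeness from that formula together with (\ref{lie}), and verify metric compatibility by differentiating under the $\theta$-integral. The only differences are cosmetic: the paper carries out the compatibility step in coordinates with the Christoffel identity $\partial_c g_{ab}=\Gamma_{ca}^e g_{eb}+\Gamma_{cb}^e g_{ae}$ rather than invoking pointwise compatibility of $\nlc$ invariantly, and it handles your regularity point (that (\ref{l2lc}) makes sense on $H^{s'-1}$) by appeal to Remark \ref{lcrem} rather than via Sobolev multiplication.
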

\medskip

As in Remark \ref{lcrem}, we may assume that
$X, Y\in H^{s'-1}(\gamma^*TM)$ with $s' \gg 0$, so (\ref{l2lc}) makes sense.

\begin{proof} $\ev_\theta^*\nabla^{LC}$ is a connection on
$\ev_\theta^*TM\to LM$. We have
$\ev_{\theta,*}(X) = X(\theta)$.  If $U$ is a coordinate
neighborhood on $M$ near some $\gamma(\theta)$, then on $\ev_\theta^{-1}(U)$, 
\begin{eqnarray*}(\ev_\theta^*\nabla^{LC})_XY^a(\gamma)(\theta) &=& (\delta_{X}Y)^a(\gamma)(\theta) +
((\ev_\theta^*\omega^{LC}_{X})Y)^a (\theta)\\
&=& (\delta_{X}Y)^a(\gamma)(\theta) + 
  \chw{b}{c}{a}(\gamma(\theta))X^b(\gamma)(\theta) Y^c(\gamma)(\theta).
\end{eqnarray*}
Since $\ev_\theta^*\nabla^{LC}$ is a connection, for each fixed $\theta$, $\gamma$ and $X\in
 T_\gamma LM$, 
 $Y\mapsto$\\
 $ (\ev^*_\theta\nabla^{LC})_XY(\gamma)$
 has Leibniz rule with respect to
functions on $LM$.   Thus $D$ is a connection on $LM.$

$D$ is torsion free, as from the local expression
  $D_XY - D_YX = \delta_XY - \delta_YX = [X,Y].$

To show that  $D_XY$ is compatible with the $L^2$
  metric, first recall that  for a function $f$ on $LM$, $D_Xf = \delta_Xf =
  \frac{d}{d\e}|_{_{\e=0}}f(\gamma(\e,\theta))$ for $X(\theta)
   = \frac{d}{d\e}|_{_{\e=0}}\gamma(\e, \theta).$
  (Here $f$ depends only on
  $\gamma$.)  Thus (suppressing the partition of unity, which is independent of $\e$)
\begin{eqnarray*} D_X\langle Y,Z\rangle_0 &=& 
  \frac{d}{d\e}\biggl|_{_{_{\e=0}}}\int_{S^1} g_{ab}(\gamma(\e,\theta))
Y^a(\gamma(\e,\theta))Z^b(\gamma(\e,\theta))d\theta\\
&=& \int_{S^1}\partial_c
g_{ab}(\gamma(\e,\theta))
X^cY^a(\gamma(\e,\theta))Z^b(\gamma(\e,\theta))d\theta\\
&&\qquad + \int_{S^1} 
g_{ab}(\gamma(\e,\theta))
(\delta_XY)^a(\gamma(\e,\theta))Z^b(\gamma(\e,\theta))d\theta\\
&&\qquad 
+ \int_{S^1} 
g_{ab}(\gamma(\e,\theta))
Y^a(\gamma(\e,\theta))(\delta_XZ)^b(\gamma(\e,\theta))d\theta\\
&=& \int_{S^1}\Gamma{}_{c a}^{e}g_{eb} X^cY^aZ^b +
\Gamma{}_{c b}^{e}g_{ae}X^cY^aZ^b\\
&&\qquad +g_{ab}(\delta_XY)^aZ^b + g_{ab}Y^a(\delta_X Z)^bd\theta\\
&=& \langle D_XY,Z\rangle_0 + \langle Y, D_XZ\rangle_0.
\end{eqnarray*}
\end{proof}

\begin{rem} The local expression for $D_XY$ also holds in exponential coordinates. More precisely, let $(e_1(\theta),...,e_n(\theta))$
be a global frame of $\gamma^*TM$ given by the trivialization of
$\gamma^*TM.$  Then $(e_i(\theta))$ is also naturally a frame of
$T_XT_{\gamma(\theta)}M$ for all $X\in T_{\gamma(\theta)}M.$  We use
$\exp_{\gamma(\theta)}$ to pull back the metric on $M$ to a metric on
$T_{\gamma(\theta)}M$: 
$$g_{ij}(X) = (\exp^*_{\gamma(\theta)}g)(e_i, e_j) =
  g(d(\exp_{\gamma(\theta)})_X (e_i),   d(\exp_{\gamma(\theta)})_X
  (e_j))_{\exp_{\gamma(\theta)}X}.$$
Then the Christoffel symbols  
$\Gamma_{b c}^{a}(\gamma(\theta))$the term {\it e.g.,} 
are computed with respect to
  this metric.  For example, the term $\partial_\ell g_{bc}$ means $e_\ell
  g(e_a, e_b)$, etc.  The proof that $D_XY$ has the local expression (\ref{l2lc}) 
  then carries over to exponential coordinates.

\end{rem}

The  $s=1$ Levi-Civita connection on $LM$ is given as follows.

\begin{thm} \label{old1.6}  
The $s=1$ Levi-Civita connection $ \nabla^1$ on $LM$ is given at the loop
$\gamma$ by
\begin{eqnarray*}  \nabla^1_XY &=& D_XY + \frac{1}{2}\wgti\left[
-\ndg(R(X,\dg)Y) - R(X,\dg)\ndg Y\right.\\
&&\qquad \left. -\ndg(R(Y,\dg)X) - R(Y,\dg)\ndg X\right.\\
&&\qquad \left. +R(X,\ndg Y)\dg - R(\ndg X, Y)\dg\right].
\end{eqnarray*}
\end{thm}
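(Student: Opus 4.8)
The plan is to compute the $s=1$ Levi-Civita connection directly from the six-term formula (\ref{5one}), using the $L^2$ connection $D$ of Lemma \ref{lem:l2lc} as a baseline. Since $D$ is already the Levi-Civita connection for the $s=0$ metric, I would write $\nabla^1_X Y = D_X Y + A(X,Y)$ for an unknown correction term $A(X,Y)$ and determine $A$ by comparing the six-term formulas for $\langle\ ,\ \rangle_0$ and $\langle\ ,\ \rangle_1$. The key algebraic input is that the two inner products are related by the operator $1+\Delta$: explicitly $\langle X, Y\rangle_1 = \langle (1+\Delta) X, Y\rangle_0 = \langle X, (1+\Delta) Y\rangle_0$, since $\Delta = D^*D$ is self-adjoint for the $L^2$ pairing. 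This is why the factor $\wgti$ must appear in the answer: to convert a correction naturally computed in the $L^2$ pairing back into the $s=1$ pairing, one applies $\wgti = (1+\Delta)^{-1}$.

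First I would expand each of the six terms on the right-hand side of (\ref{5one}) for $s=1$. Using $\langle Y,Z\rangle_1 = \langle Y, (1+\Delta) Z\rangle_0$ and the compatibility of $D$ with the $L^2$ metric, the derivative terms like $X\langle Y,Z\rangle_1$ produce $\langle D_X Y, Z\rangle_1 + \langle Y, D_X Z\rangle_1$ plus an extra contribution coming from the $\e$-dependence of $\Delta$ itself—that is, from differentiating the covariant derivative $D = D/d\gamma$ along the varying family $\gamma(\e,\theta)$. This variation of $\Delta$ is precisely what generates the curvature terms $R(X,\dg)$, since the commutator $[\delta_X, D]$ along the loop is measured by the curvature of $M$ contracted against $\dg$. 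I would compute $\dtau \Delta$, or equivalently the first variation of $D_{\dg}$ in the direction $X$, and express it through $R(X,\dg)$; this is the computational heart of the argument.

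After collecting the curvature contributions from all three derivative terms and the three bracket terms, I would regroup everything into the form $\langle D_X Y, Z\rangle_1 + \langle Y, D_X Z\rangle_1 + \langle B(X,Y), Z\rangle_1$, where $B(X,Y)$ is the correction expressed via the $s=1$ pairing. Solving $2\langle \nabla^1_X Y, Z\rangle_1 = \cdots$ then isolates $A(X,Y) = \tfrac12 \wgti\,(\text{curvature terms})$, where the $\wgti$ arises because the curvature terms are produced naturally in the $L^2$ pairing and must be moved across $1+\Delta$ to land in the $s=1$ pairing. Matching the symmetrized and antisymmetrized combinations of $R(X,\dg)Y$, $R(X,\ndg Y)\dg$, and their $X\leftrightarrow Y$ counterparts should reproduce the six curvature terms in the stated formula, using the first Bianchi identity and the symmetries $R(X,Y) = -R(Y,X)$, $\langle R(X,Y)Z,W\rangle = -\langle R(X,Y)W,Z\rangle$ to simplify.

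The main obstacle will be correctly computing the first variation of $\Delta$—equivalently, tracking how $D_{\dg}$ and hence $\Delta = D^*D$ depend on the loop $\gamma$ as it moves in the direction $X$—and ensuring that integration by parts on $S^1$ is handled consistently, since the curvature terms involve both $\ndg(R(X,\dg)Y)$ (a derivative landing on a curvature expression) and $R(X,\dg)\ndg Y$ (the derivative on $Y$). Keeping the adjoint $D^*$ straight under the $L^2$ pairing, and verifying that no boundary terms survive because $S^1$ is closed, is where sign errors and missing terms are most likely to creep in. A useful internal consistency check is that the correction $A(X,Y)$ must be symmetric in $X$ and $Y$—this is forced by torsion-freeness, since $\nabla^1_X Y - \nabla^1_Y X = [X,Y] = D_X Y - D_Y X$—and indeed the stated formula is manifestly symmetric under $X\leftrightarrow Y$, which I would use to cross-check the bracket-term computation against the derivative-term computation.
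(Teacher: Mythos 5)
Your proposal is correct and follows essentially the same route as the paper: your comparison of the six-term formulas via $\langle\ ,\ \rangle_1 = \langle(1+\Delta)\,\cdot\,,\cdot\,\rangle_0$ is the paper's Proposition \ref{old1.3}, your ``first variation of $D_{\dg}$ in the direction $X$'' is exactly the paper's key Lemma $[D_X,\nabla_{\dg}]Y = R(X,\dg)Y$ (whence $[D_X,\Delta]Y = -\ndg(R(X,\dg)Y) - R(X,\dg)\ndg Y$), and the contribution where $Z$ sits inside the derivative is converted, via the pair symmetry $\langle R(X,Y)Z,W\rangle = \langle R(Z,W)X,Y\rangle$, into $\frac{1}{2}\wgti\bigl(R(X,\ndg Y)\dg - R(\ndg X,Y)\dg\bigr)$, just as you describe. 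One small precision: the curvature comes from the covariant commutator $[D_X,\nabla_{\dg}]$ rather than from $[\delta_X,\nabla_{\dg}]$ (which also picks up Christoffel-variation terms), but your parenthetical rephrasing in terms of the variation of $D_{\dg}$ already captures this.
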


On the right hand side of this formula, the term
 $\ndg(R(X,\dg)Y)$ denotes the vector field along $\gamma$ whose
value at $\theta$ is $\nabla^{LC}_{\dg(\theta)}R(X(\theta),\dg(\theta))Y(\theta).$

We prove this in a series of steps.  The assumption in the next Proposition will be dropped later.

\begin{prop} \label{old1.3}
The Levi-Civita connection for the $s=1$ metric is given by
$$\nabla_X^1Y = D_XY + \frac{1}{2}\wgti[D_X, 1+\Delta]Y +
  \frac{1}{2}\wgti[D_Y, 1+\Delta]X 
+ A_XY,$$
where  we assume that for $X, Y\in H^{s'-1}$, $A_XY$ is well-defined by
\begin{equation}\label{insert2}-\frac{1}{2}\langle [D_Z,\wgt]X,Y\rangle_0 = \langle A_XY,Z\rangle_1.
\end{equation}
\end{prop}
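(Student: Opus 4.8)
The plan is to feed the six-term Koszul formula $(\ref{5one})$ for the $s=1$ metric into the known $L^2$ connection $D$ of Lemma \ref{lem:l2lc}, using $D$ to absorb all derivatives of inner products. The two structural facts I would use repeatedly are that $\langle A,B\rangle_1=\langle(1+\Delta)A,B\rangle_0$ with $1+\Delta$ self-adjoint for $\langle\ ,\ \rangle_0$ (equivalently $\langle A,B\rangle_0=\langle\wgti A,B\rangle_1$), and that $D$ is torsion free and compatible with the $L^2$ metric.

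First I would prove the single identity that drives everything:
\begin{equation*}
X\langle Y,Z\rangle_1=\langle D_XY,Z\rangle_1+\langle Y,D_XZ\rangle_1+\langle[D_X,1+\Delta]Y,Z\rangle_0.
\end{equation*}
This comes from writing $X\langle Y,Z\rangle_1=X\langle(1+\Delta)Y,Z\rangle_0$, applying $L^2$-compatibility of $D$, and splitting $D_X((1+\Delta)Y)=(1+\Delta)D_XY+[D_X,1+\Delta]Y$; the commutator records how $\Delta$ varies as the loop moves in the $X$ direction. Applying this to the three derivative terms of $(\ref{5one})$, and using torsion freeness ($[X,Y]=D_XY-D_YX$ and its cyclic analogues) on the three bracket terms, rewrites the entire right-hand side as $s=1$ inner products of $D$-derivatives plus exactly three commutator corrections.

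Next I would collect. The $s=1$ inner-product terms form the Koszul combination built from $D$, and by a direct cancellation (using only symmetry of $\langle\ ,\ \rangle_1$ and torsion freeness of $D$) they telescope to $2\langle D_XY,Z\rangle_1$. Among the three corrections, the two arising from $X\langle Y,Z\rangle_1$ and $Y\langle X,Z\rangle_1$ carry $Z$ in the free slot, so I can move $\wgti$ across by $\langle\,\cdot\,,Z\rangle_0=\langle\wgti\,\cdot\,,Z\rangle_1$ to produce $\langle\wgti[D_X,1+\Delta]Y,Z\rangle_1$ and $\langle\wgti[D_Y,1+\Delta]X,Z\rangle_1$; these give the two middle terms of the claimed formula.

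The correction $-\langle[D_Z,1+\Delta]X,Y\rangle_0$ coming from $-Z\langle X,Y\rangle_1$ is the one that resists this: the test field $Z$ sits inside the operator $[D_Z,1+\Delta]$ rather than in a free inner-product slot, and self-adjointness alone cannot pull it out. This is the genuine obstacle, and the Proposition sidesteps it precisely here by invoking $(\ref{insert2})$ to name the representing vector, $-\langle[D_Z,1+\Delta]X,Y\rangle_0=2\langle A_XY,Z\rangle_1$. Dividing by $2$ and using nondegeneracy of $\langle\ ,\ \rangle_1$ on the dense set of $Z\in H^{s'-1}$ then yields the stated formula. The substantive work left open --- that $Z\mapsto\langle[D_Z,1+\Delta]X,Y\rangle_0$ is actually a bounded functional, so that $A_XY$ exists, and its closed form via the explicit commutator $[D_Z,\Delta]$ (integration by parts and curvature terms) --- is what Theorem \ref{old1.6} carries out, which is why the present statement is conditional.
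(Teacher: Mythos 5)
Your proof is correct and takes essentially the same route as the paper's: both feed the six-term Koszul formula into the $L^2$ connection $D$, use torsion-freeness on the bracket terms, split off the three commutators $[D_\cdot\,,1+\Delta]$, and handle the recalcitrant $-\langle[D_Z,1+\Delta]X,Y\rangle_0$ term via the defining relation (\ref{insert2}). Your packaging of the computation into the single compatibility-defect identity $X\langle Y,Z\rangle_1=\langle D_XY,Z\rangle_1+\langle Y,D_XZ\rangle_1+\langle[D_X,1+\Delta]Y,Z\rangle_0$ is just a tidier bookkeeping of the same cancellations the paper carries out term by term.
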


\begin{proof} By Lemma \ref{lem:l2lc},
\begin{eqnarray*} X\langle Y,Z\rangle_1 &=& X\langle (\wgt)Y,Z\rangle_0 =
  \langle D_X((\wgt)Y),Z\rangle_0 + \langle (\wgt)Y, D_XZ\rangle_0\\
Y\langle X,Z\rangle_1 &=& \langle D_Y((\wgt)X),Z\rangle_0 + \langle (\wgt)X,
D_YZ\rangle_0\\
-Z\langle X,Y\rangle_1 &=& -\langle D_Z((\wgt)X),Y\rangle_0 - \langle (\wgt)X,
D_ZY\rangle_0\\
\langle [X,Y],Z\rangle_1 &=& \langle(\wgt)(\delta_XY - \delta_YX), Z\rangle_0
= \langle (\wgt)(D_XY - D_YX),Z\rangle_0\\
\langle[Z,X],Y\rangle_1 &=& \langle(\wgt)(D_ZX-D_XZ),Y\rangle_0\\
-\langle[Y,Z],X\rangle_1 &=& -\langle(\wgt)(D_YZ-D_ZY),X\rangle_0.
\end{eqnarray*}
The six terms on the left hand side must sum up to $2\langle \nabla^1_XY,Z\rangle_1$ 
in the sense of Remark \ref{lcrem}.
After some cancellations,  we get
\begin{eqnarray*} 2\langle\nabla^1_XY,Z\rangle_1 &=& \langle D_X((\wgt)Y),Z\rangle_0 +
  \langle D_Y((\wgt)X),Z\rangle_0\nonumber\\
&&\qquad + \langle (\wgt)(D_XY - D_YX),Z\rangle_0 - \langle
  D_Z((\wgt)X),Y\rangle_0\nonumber\\
&&\qquad +\langle(\wgt)D_ZX),Y\rangle_0\nonumber\\
&=& \langle (\wgt)D_XY,Z\rangle_0 + \langle [D_X,\wgt] Y, Z\rangle_0\\
&&\qquad + \langle (\wgt)D_YX,Z\rangle_0 + \langle [D_Y,\wgt] X, Z\rangle_0\nonumber\\
&&\qquad + \langle (\wgt)(D_XY - D_YX),Z\rangle_0 -\langle
  [D_Z,\wgt]X,Y\rangle_0\\
&=& 2\langle D_XY,Z\rangle_1 + \langle \wgti[D_X,\wgt]Y,Z\rangle_1\\
&&\qquad + \langle \wgti[D_Y,\wgt]X,Z\rangle_1 +2
\langle A_XY,Z\rangle_1.
\end{eqnarray*}

\end{proof}

Now we compute the bracket terms in the Proposition.  We have $[D_X,\wgt] =
[D_X,\Delta]$. For $\dg = (d/d\theta)\gamma$, 
$$0 = \dot\gamma\langle X, Y\rangle_0 = \langle\nabla_{\dot\gamma}X,Y\rangle_0
+ \langle X,\nabla_{\dot\gamma}Y\rangle_0,$$
so 
\begin{equation}\label{one}\Delta = \nabla_{\dot\gamma}^* \nabla_{\dot\gamma}
  = -\nabla_{\dot\gamma}^2.
\end{equation}

\begin{lem} $[D_X,\nabla_{\dot\gamma}]Y = R(X,\dot\gamma)Y.$
\end{lem}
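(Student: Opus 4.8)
The plan is to compute the commutator $[D_X, \nabla_{\dot\gamma}]$ directly, recognizing it as a curvature term. The key observation is that $D_X$ is the pullback connection $\ev_\theta^*\nabla^{LC}$ evaluated along the loop, while $\nabla_{\dot\gamma} = \nabla^{LC}_{\dot\gamma}$ is covariant differentiation along the loop in the $\theta$ direction. Both are built from the same Levi-Civita connection on $M$, so their commutator should be governed by the Riemann curvature tensor $R$ of $M$. Concretely, $\nabla_{\dot\gamma}$ differentiates in the loop parameter $\theta$ (the direction $\dot\gamma = (d/d\theta)\gamma$), whereas $D_X$ differentiates in the direction of the variation $X$ of loops. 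These two directions are the two ``legs'' of the curvature, so I expect $[D_X, \nabla_{\dot\gamma}]Y = R(X, \dot\gamma)Y$ by the standard formula $[\nabla_U, \nabla_V] = \nabla_{[U,V]} + R(U,V)$.

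First I would set up a two-parameter family of loops $\gamma(\e, \theta)$ with $\frac{d}{d\e}|_{\e=0}\gamma(\e,\theta) = X(\theta)$, so that the variational direction $X$ and the loop direction $\dot\gamma$ both arise as coordinate vector fields of the map $H(\e,\theta) = \gamma(\e,\theta): \R^2 \to M$. The point is that $D_X$ corresponds to $\nabla^{LC}_{\partial_\e}$ and $\nabla_{\dot\gamma}$ corresponds to $\nabla^{LC}_{\partial_\theta}$ applied to $Y$ pulled back via $H$. Since $\partial_\e$ and $\partial_\theta$ are coordinate vector fields on $\R^2$, their Lie bracket vanishes, $[\partial_\e, \partial_\theta] = 0$, and the pushforwards $H_*\partial_\e = X$, $H_*\partial_\theta = \dot\gamma$ satisfy $[X, \dot\gamma] = 0$ as vector fields along $H$.

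Then I would apply the defining property of the Riemann curvature tensor on $M$: for the pullback connection $H^*\nabla^{LC}$,
\begin{equation*}
\nabla^{LC}_{\partial_\e}\nabla^{LC}_{\partial_\theta}Y - \nabla^{LC}_{\partial_\theta}\nabla^{LC}_{\partial_\e}Y = R(H_*\partial_\e, H_*\partial_\theta)Y = R(X, \dot\gamma)Y,
\end{equation*}
with no $\nabla^{LC}_{[\partial_\e,\partial_\theta]}Y$ term since $[\partial_\e, \partial_\theta] = 0$. Identifying the left-hand side with $D_X \nabla_{\dot\gamma}Y - \nabla_{\dot\gamma}D_XY = [D_X, \nabla_{\dot\gamma}]Y$ then yields the claim. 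Here I would use Lemma \ref{lem:l2lc}, which identifies $D_X$ precisely as $\ev_\theta^*\nabla^{LC}$ applied pointwise, so that the abstract curvature identity on $M$ transfers verbatim to the loop space operators.

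The main obstacle, such as it is, is bookkeeping rather than conceptual: I must be careful that $\nabla_{\dot\gamma}$ (differentiation along the loop) and $D_X$ (the $L^2$ connection) are genuinely the $\partial_\theta$ and $\partial_\e$ components of the single pullback connection $H^*\nabla^{LC}$ on the surface swept out by $H$, and that $Y$ is extended consistently as a section of $H^*TM$ so that both derivatives act on the same object. Once the family $H(\e,\theta)$ is in place and $[X, \dot\gamma] = 0$ is verified, the identity is an immediate consequence of the standard curvature formula, and the only real care needed is to confirm that the extension of $Y$ off the loop does not affect the answer — which it does not, since curvature is tensorial in $X$ and $\dot\gamma$ and the final expression $R(X,\dot\gamma)Y$ depends only on the pointwise values $X(\theta), \dot\gamma(\theta), Y(\theta)$.
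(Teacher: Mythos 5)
Your proposal is correct, but it takes a genuinely different route from the paper. The paper proves the lemma by brute force in manifold coordinates: it first records that the variation $\delta_Z$ commutes with the loop derivative, $Z(\dg^\nu)=\partial_\theta Z^\nu$ (since $\theta$ and the variation parameter are independent coordinates on the parameter space), then expands both $(D_X\ndg Y)^a$ and $(\ndg D_XY)^a$ in Christoffel symbols and watches all first-derivative terms in $Y$ cancel, leaving exactly $(\partial_j\Gamma_{bc}^a-\partial_b\Gamma_{jc}^a+\Gamma_{je}^a\Gamma_{bc}^e-\Gamma_{be}^a\Gamma_{jc}^e)\dg^bX^jY^c=R_{jbc}^{\ \ \ a}X^j\dg^bY^c$. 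You instead invoke the classical two-parameter-family identity $\nabla_{\partial_\e}\nabla_{\partial_\theta}Y-\nabla_{\partial_\theta}\nabla_{\partial_\e}Y=R(\partial_\e H,\partial_\theta H)Y$ for the pullback connection $H^*\nabla^{LC}$ along $H(\e,\theta)=\gamma(\e,\theta)$, together with the identification (via Lemma \ref{lem:l2lc}) of $D_X$ with $\nabla_{\partial_\e}$ at $\e=0$; this identification is legitimate because a vector field $Y$ on $LM$ restricts to a section $Y_{\gamma(\e,\cdot)}(\theta)$ of $H^*TM$, and both $D_X$ and $\ndg$ act on that same section. One small caveat: your statement ``$[X,\dg]=0$ as vector fields along $H$'' is informal, since these are not vector fields on $M$ where loops cross; the precise input is that $[\partial_\e,\partial_\theta]=0$ upstairs, so the pullback-curvature formula carries no bracket correction term (equivalently, by torsion-freeness, $\nabla_{\partial_\e}\partial_\theta H=\nabla_{\partial_\theta}\partial_\e H$) --- which is exactly the content the paper encodes in coordinates as $Z(\dg^\nu)=\dot Z^\nu$. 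Your argument is shorter, conceptually transparent, and robust under changes of curvature sign convention; the paper's coordinate computation, while heavier, pins down the index conventions for $R_{jbc}^{\ \ \ a}$ and exercises the local formalism that is reused in the symbol calculations of the Appendix.
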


\begin{proof} 
Note that $\gamma^\nu, \dot\gamma^\nu$ are locally defined functions on 
$S^1\times LM.$
Let $\tilde\gamma:
[0,2\pi]\times (-\e,\e)\to M$ be a smooth map with $\tilde\gamma(\theta,0) =
\gamma(\theta)$, and
$\frac{d}{d\tau}|_{\tau = 0}\tilde\gamma(\theta,\tau) = Z(\theta).$
Since $(\theta,\tau)$ are coordinate functions on
$S^1\times (-\e,\e)$, we have
\begin{eqnarray}\label{badterms} Z(\dg^\nu) &=& \delta_Z(\dg^\nu) = \ptau{Z}(\dg^\nu) =
\dtau\right.\left(\frac{\partial}{\partial\theta}
(\tilde\gamma(\theta,\tau)^\nu\right)\\
&=& \frac{\partial}{\partial\theta}
\dtau\right. \tilde\gamma(\theta,\tau)^\nu = \partial_\theta Z^\nu \equiv
\dot Z^\nu.\nonumber
\end{eqnarray}

 We compute
\begin{eqnarray*} 
(D_X\nabla_{\dg} Y)^a 
&=& \delta_X(\nabla_{\dg} Y)^a +
  \chw{b}{c}{a}X^b\nabla_{\dg} Y^c\\
&=& \delta_X(\dg^j\partial_jY^a + \chw{b}{c}{a}\dg^bY^c)
+ \chw{b}{c}{a}X^b(\dg^j\partial_jY^c + \chw{e}{f}{c}\dg^e Y^f)\\
&=& \dot X^j\partial_jY^a + \dg^j\partial_j\delta_XY^a +
  \partial_m\chw{b}{c}{a}X^m\dg^bY^c
+ \chw{b}{c}{a}\dot X^bY^c + \chw{b}{c}{a}\dg^b\delta_XY^c\\
&&\qquad 
+ \chw{b}{c}{a}X^b\dg^j\partial_jY^c +
  \chw{b}{c}{a}\chw{e}{f}{c}X^b\dg^eY^f.\\
(\nabla_{\dg} D_XY)^a &=& \dg^j(\partial_j(D_XY)^a +
  \chw{b}{c}{a}\dg^b (D_XY)^c)\\
&=& \dg^j\partial_j(\delta_XY^a + \chw{b}{c}{a}X^b Y^c) 
+ \chw{b}{c}{a}\dg^b(\delta_XY^c + \chw{s}{f}{c}X^eY^f)\\
&=& \dg^j\partial_j\delta_XY^a + \dg^j\partial_j\chw{b}{c}{a}X^bY^c +
  \chw{b}{c}{a}\dot X^bY^c
+ \chw{b}{c}{a}X^b\dot Y^c + \chw{b}{c}{a}\dg^b\delta_XY^c \\
&&\qquad + \chw{b}{c}{a}\chw{e}{f}{c}\dg^b X^eY^f.
\end{eqnarray*}
Therefore
\begin{eqnarray*} (D_X\nabla_{\dg}Y - \nabla_{\dg}D_XY)^a &=& \partial_m
  \chw{b}{c}{a}X^m\dg^bY^c - \partial_j \chw{b}{c}{a}\dg^j X^bY^c
  + \chw{b}{c}{a}\chw{e}{f}{c}X^b\dg^e Y^f   \\
&&\qquad 
-\chw{b}{c}{a}\chw{e}{f}{c}\dg^b X^e Y^f \\
&=& (\partial_j \Gamma_{bc}^{a} - \partial_b \chw{j}{c}{a} +\chw{j}{e}{a}\chw{b}{c}{e}-
\chw{b}{e}{a}\chw{j}{c}{e})\dg^b X^j Y^c \\
&=& R_{jbc}^{\ \ \ a}X^j\dg^b Y^c,
\end{eqnarray*}
so 
$$D_X\nabla_{\dg}Y - \nabla_{\dg}D_XY =  R(X,\dg)Y.$$
\end{proof}

\begin{cor}\label{cor:zero}
 At the loop $\gamma$, $[D_X,\Delta]Y = -\nabla_{\dg}(R(X,\dot\gamma)Y) -
  R(X,\dg)\nabla_{\dg}Y.$  In particular, $[D_X,\Delta]$ is a zeroth order
  operator.
\end{cor}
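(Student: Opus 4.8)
The plan is to deduce the corollary purely algebraically from the two facts already in hand: the identity $\Delta=-\ndg^2$ in (\ref{one}), and the commutation relation $[D_X,\ndg]Y=R(X,\dg)Y$ from the preceding Lemma. I would treat $D_X$, $\ndg$, and $\Delta$ as honest composable operators on sections of $\gamma^*TM$, so that the elementary derivation identity $[A,PQ]=[A,P]Q+P[A,Q]$ (valid for any associative operators) applies. Thus I first write
$$[D_X,\Delta] = -[D_X,\ndg^2] = -\bigl([D_X,\ndg]\ndg + \ndg[D_X,\ndg]\bigr),$$
which requires no geometry at all.

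Substituting the preceding Lemma, $[D_X,\ndg]=R(X,\dg)$ (read as the endomorphism-valued, pointwise-in-$\theta$ operator $Y\mapsto R(X,\dg)Y$), into both slots gives $[D_X,\Delta]=-R(X,\dg)\ndg-\ndg\,R(X,\dg)$, i.e., on a section $Y$,
$$[D_X,\Delta]Y = -\ndg\bigl(R(X,\dg)Y\bigr) - R(X,\dg)\ndg Y,$$
which is exactly the asserted formula. All the analytic content has been front-loaded into the Lemma, so no integration by parts or local coordinate computation is needed at this stage.

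It then remains to read off the order, and this is the real point of the statement. The structural observation is that $D_X$ differentiates only in the loop direction and carries \emph{no} $\theta$-derivative: by Lemma \ref{lem:l2lc} it is $\delta_X$ plus a pointwise Christoffel term, acting pointwise in $\theta$. Hence conjugating the second-order operator $\Delta=-\ndg^2$ by $D_X$ can produce no second-order term — the two top-order pieces coming from $D_X\Delta$ and $\Delta D_X$ cancel — so the commutator is strictly lower order than $\Delta$, as the explicit formula makes manifest (it involves $Y$ only through $Y$ and $\ndg Y$). This order reduction is what matters downstream: it forces $\wgti[D_X,\Delta]$ to be a negative-order, hence bounded, $\pdo$, so that the connection one-form produced in Proposition \ref{old1.3} has the zeroth-order Christoffel term as its leading part. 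The main obstacle is essentially nonexistent once the Lemma is granted; the only thing worth checking is that $[D_X,\ndg]=R(X,\dg)$ may legitimately be used as an \emph{operator} identity inside a composition with a second $\ndg$, rather than as an identity of values on a single fixed $Y$. This is justified because the Lemma's computation holds for every section $Y$, and the derivation identity acts slotwise.
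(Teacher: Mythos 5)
Your proof is correct and takes essentially the same route as the paper: the paper also starts from $\Delta=-\ndg^2$ and computes $[D_X,\Delta]Y=(-D_X\ndg\ndg+\ndg\ndg D_X)Y$ by commuting $D_X$ past each factor of $\ndg$ via $[D_X,\ndg]=R(X,\dg)$, which is precisely your derivation identity $[A,PQ]=[A,P]Q+P[A,Q]$ applied slotwise. Your closing justification (that the Lemma holds for every vector field on $LM$, in particular for $\ndg Y$, so it may be used as an operator identity) is exactly the implicit step in the paper's two-line substitution.
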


\begin{proof}  
\begin{eqnarray*} [D_X,\Delta]Y &=& (-D_X\ndg\ndg + \ndg\ndg D_X)Y \\
&=& -(\ndg D_X\ndg Y+ R(X,\dg)\ndg Y) +\ndg\ndg D_XY\\
&=& -(\ndg\ndg D_XY + \ndg(R(X,\dg)Y) + R(X,\dg)\ndg Y) 
+\ndg\ndg D_XY\\
&=& -\ndg(R(X,\dg)Y) - R(X,\dg)\ndg Y.
\end{eqnarray*}
\end{proof}

Now we complete the proof of Theorem \ref{old1.6}, showing in the process that $A_XY$ exists. 
\medskip

\noindent {\it Proof of Theorem \ref{old1.6}.}
 By Proposition \ref{old1.3} and Corollary \ref{cor:zero}, we have
\begin{eqnarray*} \nabla^1_XY &=& D_XY + \frac{1}{2}\wgti[D_X,\wgt]Y +
  (X\leftrightarrow Y) + A_XY\\
&=& D_XY + \frac{1}{2}\wgti(-\ndg(R(X,\dg)Y) - R(X,\dg)\ndg Y) + 
  (X\leftrightarrow Y) + A_XY,
\end{eqnarray*}
where $(X\leftrightarrow Y)$ denotes the previous term with $X$ and $Y$ switched.

The curvature tensor satisfies 
$$-\langle Z, R(X,Y)W\rangle = \langle R(X,Y)Z,W\rangle = \langle R(Z,W)X,Y
\rangle$$
 pointwise, so
\begin{eqnarray*} \langle A_XY,Z\rangle_1 &=&
  -\frac{1}{2}\langle[D_Z,\wgt]X,Y\rangle_0\\
&=& -\frac{1}{2}\langle (-\ndg(R(Z,\dg)X) - R(Z,\dg)\ndg X,Y\rangle_0\\
&=& -\frac{1}{2} \langle R(Z,\dg)X,\ndg Y\rangle_0 + \frac{1}{2}\langle
  R(Z,\dg)\ndg X,Y\rangle_0\\
&=& -\frac{1}{2} \langle R(X,\ndg Y)Z,\dg\rangle_0 + \frac{1}{2}\langle R(\ndg X,
  Y)Z,\dg\rangle_0\\
&=& \frac{1}{2}\langle Z, R(X,\ndg Y)\dg\rangle_0 - \frac {1}{2} \langle Z,
  R(\ndg X,Y)\dg\rangle_0\\
&=&\frac{1}{2}\langle Z, \wgti(R(X,\ndg Y)\dg - R(\ndg X, Y)\dg)\rangle_1.
\end{eqnarray*}
Thus $A_XY$ must equal $\frac{1}{2} \wgti(R(X,\ndg Y)\dg - R(\ndg X, Y)\dg).$  
This makes sense: for $X, Y\in H^{s'-1}$, 
 $A_XY\in H^{s'}\subset H^1,$ since $R$ is zeroth order.  
\hfill$\Box$
\medskip

\begin{rem} Locally on $LM$, we should have 
$D_XY = \delta_X^{LM}Y +  \omega_X^{LM}(Y)$.  
Now
$\delta_X^{LM}Y$  can only mean $\frac{d}{d\tau}|_{\tau =
  0}\frac{d}{d\epsilon}|_{\epsilon = 0}\gamma(\epsilon,\tau,\theta)$, where
$\gamma(0,0,\theta) = \gamma(\theta)$, $\frac{d}{d\epsilon}|_{\epsilon =
  0}\gamma(\epsilon,0,\theta) = X(\theta)$, 
$\frac{d}{d\tau}|_{\tau = 0}\gamma(\epsilon,\tau,\theta) = Y_{\gamma(\epsilon, 0,\cdot)}(\theta).$
 In other words, $\delta_X^{LM}Y$ equals $ \delta_XY$.
Since $D_XY^a = \delta_XY^a + \chw{b}{c}{a}(\gamma(\theta))$, the connection one-form 
$\omega^{LM}$ for the $L^2$ Levi-Civita connection on $LM$ is related to the connection one-form
$\omega^M$ for the Levi-Civita connection on $M$ by
$$\omega^{LM}_X(Y)^a(\gamma)(\theta) = \chw{b}{c}{a}(\gamma(\theta))X^bY^c
= \omega^M_X(Y)^a(\gamma(\theta)).$$
\end{rem}

By this remark, we get
\begin{cor}\label{cor2} The connection one-form $\omega^1$ for $\nabla^1$ in 
exponential coordinates is
\begin{eqnarray}\label{two}\omega^1_X(Y)(\gamma)(\theta) &=& \omega^M_X(Y)(\gamma(\theta))
  +  
\frac{1}{2}\bigl\{\wgti\left[
-\ndg(R(X,\dg)Y) - R(X,\dg)\ndg Y\right.\nonumber\\
&&\qquad \left. -\ndg(R(Y,\dg)X) - R(Y,\dg)\ndg X\right.\\
&&\qquad  \left.+R(X,\ndg Y)\dg - R(\ndg X, Y)\dg\right]\bigr\}(\theta).\nonumber
\end{eqnarray}
\end{cor}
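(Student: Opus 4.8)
The plan is to read $\omega^1$ off directly from the formula of Theorem \ref{old1.6}, using the fact that in any local trivialization the connection one-form is by definition the difference between the covariant derivative and the naive coordinate (directional) derivative $\delta_X$. Writing $\nabla^1_X Y = \delta_X Y + \omega^1_X(Y)$ in exponential coordinates, one has $\omega^1_X(Y) = \nabla^1_X Y - \delta_X Y$, so the task reduces to isolating and subtracting the $\delta_X Y$ piece from the expression in Theorem \ref{old1.6}.

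First I would invoke Theorem \ref{old1.6} to write $\nabla^1_X Y = D_X Y + \frac{1}{2}\wgti[\,\cdots]$, where the bracket is the sum of the six curvature terms. The preceding Remark supplies the local form of the $L^2$ connection in exponential coordinates, namely $D_X Y = \delta_X Y + \omega^M_X(Y)(\gamma(\theta))$, with $\omega^M$ the Levi-Civita connection one-form on $M$ evaluated at the point $\gamma(\theta)$. Substituting this identity and cancelling the common $\delta_X Y$ then yields exactly the claimed formula (\ref{two}) for $\omega^1_X(Y)(\gamma)(\theta)$.

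The one point that must be checked, and which I regard as the only real content beyond bookkeeping, is that the entire $\frac{1}{2}\wgti[\,\cdots]$ term contributes legitimately to $\omega^1$ and gives none of its pieces back to $\delta_X Y$. Although the operators $\wgti$ and $\ndg = \nabla_{\dg}$ appearing there involve derivatives in the loop parameter $\theta$, none of them is a $\delta_X$-derivative: the variable $X$ enters only algebraically, through $R$, and the bracket differentiates $Y$ \emph{along the loop} rather than \emph{along $LM$ in the direction $X$}. Hence, viewed as a one-form in $X$, the correction does not differentiate $Y$ in the $LM$-direction; it is a pointwise-in-$LM$ $\pdo$ applied to $Y$ and so belongs entirely to $\omega^1_X(Y)$. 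This is precisely what makes $\omega^1$ an operator-valued (rather than algebraic) connection one-form, consistent with the extension of the structure group discussed in the Introduction.

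Finally, I would emphasize that the resulting expression is a coordinate formula valid in exponential coordinates, since the identification $D_X Y - \delta_X Y = \omega^M$ holds only in those coordinates (as clarified in the Remark following Lemma \ref{lem:l2lc}); the well-definedness of the $\wgti$ correction was already secured in the proof of Theorem \ref{old1.6}, so no further analytic argument is needed.
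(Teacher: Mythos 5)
Your proposal is correct and is essentially the paper's own argument: the paper derives Corollary \ref{cor2} directly from Theorem \ref{old1.6} together with the preceding Remark identifying $D_XY = \delta_XY + \omega^M_X(Y)$ in exponential coordinates, exactly as you do. Your additional observation that the $\wgti$ correction differentiates $Y$ only along the loop (never in the $\delta_X$ direction), and so contributes entirely to the operator-valued one-form, is a correct piece of bookkeeping that the paper leaves implicit.
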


From this Corollary, we can directly compute the curvature of the $s=1$ metric connection.  The result is unpleasant. Fortunately, for the characteristic classes discussed in \cite{MRT2}, we only need the higher order symbols of the curvature.  These are computed in Appendix A.

\subsection{The Levi-Civita Connection for $s\in\BbZ^+$}
${}$
\medskip

For $s>\frac{1}{2}$, the proof of Prop.~\ref{old1.3} extends directly to give

\begin{lem} \label{lem: LCs}
The Levi-Civita connection for the $H^s$ metric is given by
$$\nabla_X^sY = D_XY + \frac{1}{2}\wgtsi[D_X, \wgts]Y +
  \frac{1}{2}\wgtsi[D_Y, \wgts]X 
+ A_XY,$$
where we assume that for $X, Y\in H^{s'-1}$, $A_XY\in H^s$ is characterized by
\begin{equation}\label{axy}
-\frac{1}{2}\langle [D_Z,\wgts]X,Y\rangle_0 = \langle A_XY,Z\rangle_s.
\end{equation}
\end{lem}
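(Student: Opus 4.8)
The plan is to mimic the proof of Proposition \ref{old1.3} verbatim, replacing the operator $\wgt$ by $\wgts$ throughout. The starting point is the six-term formula (\ref{5one}) for the Levi-Civita connection, now written with respect to the $H^s$ inner product $\ips{\cdot}{\cdot}$. First I would use the identity $\ips{X}{Y} = \ipo{\wgts X}{Y}$ to convert each of the six terms on the RHS of (\ref{5one}) into an $L^2$ pairing, exactly as in the display at the start of the proof of Proposition \ref{old1.3}. For the three directional-derivative terms, I apply Lemma \ref{lem:l2lc} so that $X\ips{Y}{Z} = \ipo{D_X(\wgts Y)}{Z} + \ipo{\wgts Y}{D_X Z}$, and similarly for the $Y$- and $Z$-derivative terms. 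For the three bracket terms, I use that $[X,Y] = \delta_X Y - \delta_Y X = D_X Y - D_Y X$ (since the Christoffel contributions cancel in the difference), so each bracket term becomes an $L^2$ pairing against $\wgts$ applied to a difference $D_\bullet\,\cdot - D_\bullet\,\cdot$.

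Next I collect the six rewritten terms, which must sum to $2\ips{\nabla^s_X Y}{Z}$ in the distributional sense of Remark \ref{lcrem}. The cancellations are formally identical to the $s=1$ case: the $L^2$-compatibility of $D$ (Lemma \ref{lem:l2lc}) makes the $\ipo{\wgts \bullet}{D_Z(\cdot)}$-type pieces cancel against pieces coming from $-Z\ips{X}{Y}$ and the bracket terms, leaving a main term $2\ipo{\wgts D_X Y}{Z} = 2\ips{D_X Y}{Z}$, two commutator terms $\ipo{[D_X,\wgts]Y}{Z}$ and $\ipo{[D_Y,\wgts]X}{Z}$, and a single leftover term $-\tfrac{1}{2}\ipo{[D_Z,\wgts]X}{Y}$. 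Rewriting the two commutator terms back in the $s$-inner product via $\ipo{[D_X,\wgts]Y}{Z} = \ips{\wgtsi[D_X,\wgts]Y}{Z}$ yields the first three summands of the claimed formula, and the leftover term is exactly the left side of (\ref{axy}), which by hypothesis defines $\ips{A_X Y}{Z}$. Dividing by $2$ gives the stated expression for $\nabla^s_X Y$.

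The one genuine point requiring care — and the main obstacle — is justifying that these manipulations are legitimate in the weak sense described in Remark \ref{lcrem}, rather than mere formal algebra. The subtlety is that $\wgts[D_Z,\wgts]X$ need not a priori lie in the space pairing continuously against $Z\in H^{s'-1}$, so one must verify that each rewritten term is a continuous linear functional of $Z$ in the $H^s$ topology, so that the Riesz-type representation underlying the definition of $\nabla^s$ and of $A_X Y$ applies. The key input is that the commutator $[D_X,\wgts]$ is an operator of order $2s-1$ rather than $2s$ (as established for $s=1$ in Corollary \ref{cor:zero}, and asserted to extend for general $s>\tfrac{1}{2}$), so that $\wgtsi[D_X,\wgts]$ is of order $-1$ and hence maps into a space on which the $H^s$ pairing is well-defined; this is precisely why the statement packages the existence of $A_X Y$ as a hypothesis (\ref{axy}) rather than proving it here, deferring the order analysis and the actual solvability of (\ref{axy}) to Theorem \ref{thm25}. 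Apart from confirming that no step uses a special feature of $s=1$ — in particular that $[D_X,\wgts] = [D_X,\Delta^s]$-type reasoning and the $L^2$-compatibility of $D$ are both $s$-independent — the argument is a direct transcription.
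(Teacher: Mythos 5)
Your proposal is correct and takes essentially the same approach as the paper: the paper proves Lemma \ref{lem: LCs} precisely by observing that the proof of Proposition \ref{old1.3} extends directly with $\wgt$ replaced by $\wgts$, which is exactly your verbatim transcription, including the same six-term cancellation pattern and the same leftover term identified with (\ref{axy}). Your closing observations are also consonant with the paper's structure: the lemma deliberately packages $A_XY$ as a hypothesis, with the order-$(2s-1)$ analysis of $[D_Z,\wgts]$ and the actual solvability of (\ref{axy}) deferred to Lemmas \ref{bracketterms}--\ref{insert3} and Theorem \ref{thm25}.
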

\bigskip

We now compute the bracket terms.

\begin{lem}\label{bracketterms}
For $s\in \Z^+$, at the loop $\gamma$,
\begin{equation}\label{bracket}
[D_X,\wgts]Y = \sum_{k=1}^s(-1)^k\left(\begin{array}{c}s\\k\end{array}\right)
\sum_{j=0}^{2k-1} \nabla_{\dg}^j(R(X,\dg)\nabla_{\dg}^{2k-1-j}Y).
\end{equation}
In particular, $[D_X,\wgts]Y$ is a $\pdo$ of order at most $2s-1$ in either $X$ or $Y$.
\end{lem}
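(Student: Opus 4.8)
The plan is to prove the formula $[D_X, \we{s}]Y = \sum_{k=1}^s (-1)^k \binom{s}{k} \sum_{j=0}^{2k-1} \ndg^j(R(X,\dg)\ndg^{2k-1-j}Y)$ by induction on $s$, using the already-established case $s=1$ (Corollary \ref{cor:zero}) as the base of the computation. Since $\we{s} = (1+\Delta)^s$ for $s \in \BbZ^+$, and $\Delta = -\ndg^2$ by (\ref{one}), I would rewrite $[D_X, \we{s}]$ in terms of brackets with $\Delta$ via the general commutator/Leibniz identity $[D_X, AB] = [D_X,A]B + A[D_X,B]$, which gives $[D_X, \Delta^s] = \sum_{i=0}^{s-1} \Delta^i [D_X,\Delta] \Delta^{s-1-i}$. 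Expanding $\we{s} = (1+\Delta)^s = \sum_{k=0}^s \binom{s}{k}\Delta^k$ reduces everything to computing $[D_X, \Delta^k]$, so the heart of the matter is a clean formula for that single commutator.

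The key computational step is therefore to establish, for each $k$, that $[D_X, \Delta^k]Y = (-1)^k \sum_{j=0}^{2k-1} \ndg^j(R(X,\dg)\ndg^{2k-1-j}Y)$. I would prove this by induction on $k$: the case $k=1$ is exactly Corollary \ref{cor:zero} after substituting $\Delta = -\ndg^2$, which yields $[D_X,\Delta]Y = -\ndg(R(X,\dg)Y) - R(X,\dg)\ndg Y = -\sum_{j=0}^1 \ndg^j(R(X,\dg)\ndg^{1-j}Y)$. For the inductive step I would write $[D_X,\Delta^{k+1}] = [D_X,\Delta^k]\Delta + \Delta^k[D_X,\Delta]$, substitute $\Delta = -\ndg^2$ throughout, push the operators $\ndg^2$ through using the Leibniz rule $\ndg(R(X,\dg)W) = (\ndg R)(X,\dg)W + \ldots$ —but note that the telescoping structure means the commutator terms generated by moving $\ndg^2$ past the curvature reassemble exactly into the shifted index range $j=0,\ldots,2k+1$. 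The crucial observation making this work is that the fundamental bracket $[D_X,\ndg] = R(X,\dg)$ from the preceding Lemma is itself zeroth order, so commuting $D_X$ past any power $\ndg^m$ simply inserts curvature terms at each slot: $[D_X, \ndg^m]Y = \sum_{j=0}^{m-1}\ndg^j(R(X,\dg)\ndg^{m-1-j}Y)$.

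The order count at the end is then immediate: each summand $\ndg^j(R(X,\dg)\ndg^{2k-1-j}Y)$ is a composition of covariant derivatives totalling $2k-1$ powers of $\ndg$ (since $R$ is zeroth order), so the top term $k=s$ contributes order $2s-1$ and all lower terms have strictly smaller order; symmetry in $X, Y$ is visible because $R(X,\dg)$ and $R(Y,\dg)$ play interchangeable roles.

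I expect the main obstacle to be bookkeeping in the inductive step rather than any conceptual difficulty: when expanding $[D_X,\Delta^k]\Delta + \Delta^k[D_X,\Delta]$ with $\Delta = -\ndg^2$, one must carefully track the curvature terms produced as $D_X$ is commuted past each factor of $\ndg$, and verify that these reassemble precisely into the summation $\sum_{j=0}^{2(k+1)-1}$ with the correct binomial coefficient after recombining via $\binom{s}{k}$. The sign $(-1)^k$ and the factor structure are easy to track, but confirming that no cross terms survive outside the claimed range—and that the reindexing across the Leibniz expansion is exact—is where care is required. I would organize this by first proving the auxiliary formula for $[D_X,\ndg^m]Y$ cleanly, since that isolates all the curvature-insertion combinatorics into one lemma and makes the passage from $\Delta^k = (-1)^k\ndg^{2k}$ to the stated result purely formal.
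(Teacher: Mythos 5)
Your proposal is correct in substance and, in its final organization, follows essentially the paper's route: expand $\wgts$ binomially to reduce everything to $[D_X,\Delta^k]$, and establish $[D_X,\Delta^k]Y=(-1)^k\sum_{j=0}^{2k-1}\ndg^j(R(X,\dg)\ndg^{2k-1-j}Y)$ from the base case Corollary \ref{cor:zero} via the same splitting $[D_X,\Delta^{k+1}]=[D_X,\Delta^k]\Delta+\Delta^k[D_X,\Delta]$ that the paper uses. Your closing refinement is actually a mild improvement: proving the auxiliary identity $[D_X,\ndg^m]Y=\sum_{j=0}^{m-1}\ndg^j(R(X,\dg)\ndg^{m-1-j}Y)$ by pure operator telescoping from $[D_X,\ndg]=R(X,\dg)$, and then substituting $\Delta^k=(-1)^k\ndg^{2k}$, yields (\ref{bracket}) in one algebraic stroke, replacing the paper's induction; I would write it up that way.

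One concrete warning about the middle of your plan: the step ``push $\ndg^2$ through using the Leibniz rule $\ndg(R(X,\dg)W)=(\ndg R)(X,\dg)W+\ldots$'' is both unnecessary and, as described, would fail. Formula (\ref{bracket}) is stated in composite, unexpanded form: $\ndg^j(R(X,\dg)\ndg^{2k-1-j}Y)$ means the outer derivatives act on the whole vector field along $\gamma$. In the inductive step, the left factor $\Delta^k$ simply composes with the outer derivatives, turning the two terms of Corollary \ref{cor:zero} into the $j=2k$ and $j=2k+1$ summands, while evaluating the inductive hypothesis at $\Delta Y=-\ndg^2Y$ raises the inner exponent by $2$, giving the summands $j=0,\dots,2k-1$; no derivative ever crosses $R$. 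Had you actually expanded by Leibniz, you would generate terms involving $(\nabla_{\dg}R)(X,\dg)$, $R(\ndg X,\dg)$, and $R(X,\ndg\dg)$, none of which appears in (\ref{bracket}); they do not ``reassemble into the shifted index range'' but would have to be regrouped back into composite form by hand. Your telescoping lemma avoids this entirely, which is exactly why it is the right organization. (A minor point: the order bound in $X$ follows because the outer $\ndg^j$ differentiates $X$ at most $j\leq 2s-1$ times, not from any $X\leftrightarrow Y$ symmetry of the single commutator.)
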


\begin{proof}  The sum over $k$ comes from the binomial expansion of $\wgts$, so
we just need an inductive formula for 
$[D_X,\Delta^s].$   
The case $s=1$ is Proposition \ref{old1.3}. For the induction step, we have
\begin{eqnarray*} [D_X,\Delta^s] &=& D_X\Delta^{s-1}\Delta - \Delta^sD_X\\
&=& \Delta^{s-1}D_X\Delta + [D_X,\Delta^{s-1}]\Delta - \Delta^sD_X\\
&=& \Delta^sD_X +\Delta^{s-1}[D_X,\Delta] + [D_X,\Delta^{s-1}]\Delta
-\Delta^sD_X\\
&=& \Delta^{s-1}(-\nabla_{\dg}(R(X,\dg)Y) -R(X,\dg)\nabla_{\dg}Y)\\
&&\qquad  -  
\sum_{j=0}^{2s-3}(-1)^{s-1}
\nabla^j_{\dg}(R(X,\dg)\nabla_{\dg}^{2k-j-1}(-\nabla^2_{\dg}Y)\\
&=& (-1)^{s-1}(-\nabla_{\dg}^{2s-1}(R(X,\dg)Y) - (-1)^{s-1}\nabla_{\dg}^{2s-2}(R(X,\dg)\nabla_{\dg}Y)\\
&&\qquad + \sum_{j=0}^{2s-3}(-1)^{s}
\nabla^j_{\dg}(R(X,\dg)\nabla_{\dg}^{2k-j-1}(-\nabla^2_{\dg}Y)\\
&=& \sum_{j=0}^{2s-1}(-1)^s \nabla_{\dg}^j(R(X,\dg)\nabla_{\dg}^{2k-1-j}Y).
\end{eqnarray*} 
\end{proof}

We check that $A_XY$ is a $\pdo$ in $X$ and $Y$ for $s\in \BbZ^+.$

\begin{lem} \label{insert3} For $s\in\BbZ^+$ and fixed $X, Y\in H^{s'-1}$, $A_XY$ in (\ref{axy})
 is an explicit $\pdo$ in $X$ and $Y$ of order at most $-1.$
\end{lem}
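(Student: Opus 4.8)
The plan is to feed the explicit commutator formula of Lemma~\ref{bracketterms} into the defining relation (\ref{axy}) and then solve for $A_XY$ by rewriting the right-hand side as an $L^2$ pairing against $Z$ alone. Applying (\ref{bracket}) with $D_Z$ and $X$ in the roles of $D_X$ and $Y$ gives
\begin{equation*}
-\frac12\ipo{[D_Z,\wgts]X}{Y}
= -\frac12\sum_{k=1}^s(-1)^k\binom{s}{k}\sum_{j=0}^{2k-1}
\ipo{\ndg^j\bigl(R(Z,\dg)\ndg^{2k-1-j}X\bigr)}{Y}.
\end{equation*}
The objective is to push every $\ndg$ and the argument $Z$ out of the way so that the expression reads $\ipo{Z}{W}$ for an explicit $W$; since $\ips{A_XY}{Z}=\ipo{\wgts A_XY}{Z}$ and $\wgts$ is invertible, this forces $A_XY=\wgtsi W$, exactly as in the $s=1$ computation at the end of the proof of Theorem~\ref{old1.6}.

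Two standard manipulations accomplish this. First I would integrate by parts using the skew-adjointness $\ndg^*=-\ndg$ recorded in (\ref{one}): each outer block $\ndg^j$ transfers onto $Y$ at the cost of a sign, turning the $j$-th summand into $(-1)^j\ipo{R(Z,\dg)\ndg^{2k-1-j}X}{\ndg^jY}$. Second I would apply the pointwise curvature symmetries already invoked in the proof of Theorem~\ref{old1.6}, namely $\ipo{R(Z,\dg)U}{V}=\ipo{R(U,V)Z}{\dg}=-\ipo{Z}{R(U,V)\dg}$, to pull $Z$ into a free slot. With $U=\ndg^{2k-1-j}X$ and $V=\ndg^jY$ this yields
\begin{equation*}
-\frac12\ipo{[D_Z,\wgts]X}{Y}
= \Bigl\langle Z,\ \tfrac12\sum_{k=1}^s(-1)^k\binom{s}{k}\sum_{j=0}^{2k-1}(-1)^j\,R\bigl(\ndg^{2k-1-j}X,\ndg^jY\bigr)\dg\Bigr\rangle_{0},
\end{equation*}
so that $A_XY=\wgtsi W$ with $W$ the bracketed curvature sum.

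It remains to read off the order. As an operator in $X$ (with $Y$ fixed), $W$ is a sum of terms $R(\ndg^{2k-1-j}X,\ndg^jY)\dg$, each of which applies the differential operator $\ndg^{2k-1-j}$ to $X$ and then contracts with the zeroth-order tensors $R$, $\ndg^jY$, and $\dg$; the highest-order contribution occurs at $k=s$, $j=0$, giving a differential operator of order at most $2s-1$. Since $\wgtsi$ is a $\pdo$ of order $-2s$, the composite $A_XY=\wgtsi W$ has order at most $2s-1-2s=-1$ in $X$. The same count with the roles of $X$ and $Y$ interchanged (the summand $k=s$, $j=2s-1$ now carrying the $2s-1$ derivatives) shows $A_XY$ is a $\pdo$ of order at most $-1$ in $Y$ as well, and the displayed formula exhibits it explicitly.

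The main obstacle is purely bookkeeping: keeping the three sources of signs consistent — the $(-1)^k$ from the binomial expansion, the $(-1)^j$ from the $j$-fold integration by parts, and the sign produced by the curvature symmetry — so that $W$ is correctly identified. No analytic difficulty arises beyond the invertibility of $\wgts$ and the order calculus for $\pdo$s set up in \S\ref{pdoreview}.
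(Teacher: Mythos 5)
Your proposal is correct and takes essentially the same route as the paper's own proof: expand $[D_Z,\wgts]X$ via Lemma \ref{bracketterms}, integrate by parts $j$ times using $\ndg^*=-\ndg$ from (\ref{one}), apply the pointwise curvature symmetries (exactly as in the proof of Theorem \ref{old1.6}) to isolate $Z$, identify $A_XY=\wgtsi W$, and count orders as $(2k-1-j)-2s\leq 2s-1-2s=-1$ in $X$ (and symmetrically in $Y$). Your constant $\tfrac12(-1)^{k+j}$ is moreover the one consistent with the $s=1$ formula $A_XY=\tfrac12\wgti\left(R(X,\ndg Y)\dg-R(\ndg X,Y)\dg\right)$ of Theorem \ref{old1.6}, whereas the displayed formula for $A_XY$ following the paper's Lemma \ref{insert3} has dropped the overall factor $-\tfrac12$ — a harmless slip that affects neither the lemma's statement nor the order count.
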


\begin{proof}  By (\ref{bracket}), for  $j, 2k-1-j \in \{0,1,...,2s-1\}$, a typical term on
the left hand side of (\ref{axy}) is
\begin{eqnarray*}   \ipo{\nabla^j_{\dg}(R(Z,\dg)\nabla_{\dg}^{2k-1-j}X)}{Y} &=& 
(-1)^j
 \ipo{R(Z,\dg)\nabla_{\dg}^{2k-1-j}X}{\nabla^j_{\dg} Y}\\
&=& (-1)^j\ints g_{i\ell} (R(Z,\dg)\nabla_{\dg}^{2k-1-j}X)^i(\nabla^j_{\dg} Y)^\ell d\theta\\
&=& (-1)^j\ints g_{i\ell} Z^k R_{krn}^{\ \ \ i}\dg^r (\ndg^{2k-1-j}X)^n (\ndg^jY)^\ell d\theta\\
&=& (-1)^j
 \ints g_{tm}g^{kt}  g_{i\ell} Z^m R_{krn}^{\ \ \ i}\dg^r (\ndg^{2k-1-j}X)^n (\ndg^jY)^\ell d\theta\\
&=& (-1)^j \ipo{Z}
{g^{kt} g_{i\ell} R_{krn}^{\ \ \ i}\dg^r (\ndg^{2k-1-j}X)^n (\ndg^jY)^\ell\pa_t}\\ 
&=& (-1)^j\ipo{Z}{R^t_{\ rn\ell} \dg^r (\ndg^{2k-1-j}X)^n (\ndg^jY)^\ell\pa_t}\\ 
&=&(-1)^{j+1} \ipo{Z}{R^{\ \ \ t}_{n\ell r} \dg^r (\ndg^{2k-1-j}X)^n (\ndg^jY)^\ell\pa_t}\\
&=& (-1)^{j+1}\ipo{Z}{R(\ndg^{2k-1-j}X,\ndg^jY)\dg}\\
&=& (-1)^{j+1} \ips{Z}{\wgtsi R(\ndg^{2k-1-j}X,\ndg^jY)\dg}.
  \end{eqnarray*}
  (In the integrals and inner products, the local expressions are in fact globally defined one-forms on $S^1$, resp.~vector fields along $\gamma$, so we do not need a partition of unity.)
$\wgtsi R(\ndg^{2k-1-j}X,\ndg^jY)\dg$ is of order at most $-1$ in either $X$ or $Y$, so this term is in 
$H^{s'}\subset H^s.$  Thus the last inner product is well defined.  
\end{proof}

By  (\ref{axy}), (\ref{bracket}) and the proof of Lemma \ref{insert3}, we get
$$A_XY = \sum_{k=1}^s(-1)^k\left(\begin{array}{c}s\\k\end{array}\right)
\sum_{j=0}^{2k-1} (-1)^{j+1}   \wgtsi R(\ndg^{2k-1-j}X,\ndg^jY)\dg.$$
This gives:

\begin{thm} \label{thm:sinz}
For $s\in\BbZ^+$, the Levi-Civita connection for the $H^s$ metric at the
loop $\gamma$ is given by
\begin{eqnarray*} \nabla_X^sY(\gamma) &=& D_XY(\gamma) + \frac{1}{2}\wgtsi
 \sum_{k=1}^s(-1)^k\left(\begin{array}{c}s\\k\end{array}\right)
\sum_{j=0}^{2k-1} \nabla_{\dg}^j(R(X,\dg)\nabla_{\dg}^{2k-1-j}Y)\\
&&\qquad + \xly\\
  &&\qquad 
 +  \sum_{k=1}^s(-1)^k\left(\begin{array}{c}s\\k\end{array}\right)
\sum_{j=0}^{2k-1} (-1)^{j+1}   \wgtsi R(\ndg^{2k-1-j}X,\ndg^jY)\dg.
\end{eqnarray*}
\end{thm}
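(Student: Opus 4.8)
The plan is to read off Theorem \ref{thm:sinz} by feeding the bracket formula (\ref{bracket}) of Lemma \ref{bracketterms} and the explicit $A_XY$ computed in Lemma \ref{insert3} into the general expression of Lemma \ref{lem: LCs},
$$\nabla_X^sY = D_XY + \frac{1}{2}\wgtsi[D_X, \wgts]Y + \frac{1}{2}\wgtsi[D_Y, \wgts]X + A_XY.$$
The two bracket terms are immediate: substituting (\ref{bracket}) into $\frac{1}{2}\wgtsi[D_X,\wgts]Y$ gives the first double sum of the theorem verbatim, while $\frac{1}{2}\wgtsi[D_Y,\wgts]X$ is the same expression with $X$ and $Y$ interchanged, namely the $\xly$ term. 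This requires only bookkeeping of the binomial coefficients and the inner index $j$.

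The substantive term is $A_XY$, which I would extract from the chain of equalities in the proof of Lemma \ref{insert3}. Expanding $[D_Z,\wgts]X$ by (\ref{bracket}), integrating each $\nabla_{\dg}^j$ off the curvature factor and onto $Y$ (the skew-adjointness $\nabla_{\dg}^\ast=-\nabla_{\dg}$ supplies the $(-1)^j$), and then using the pointwise curvature symmetries to isolate $Z$ in a single slot, every summand of $\langle[D_Z,\wgts]X,Y\rangle_0$ becomes $(-1)^{j+1}\ips{Z}{\wgtsi R(\ndg^{2k-1-j}X,\ndg^jY)\dg}$, the passage from the $0$- to the $s$-pairing using the self-adjointness of $\wgts$. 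Inserting this into the defining relation (\ref{axy}), $\langle A_XY,Z\rangle_s=-\frac{1}{2}\langle[D_Z,\wgts]X,Y\rangle_0$, and invoking nondegeneracy of $\ips{\cdot}{\cdot}$ then identifies $A_XY$ as the claimed curvature sum. The one delicate point is the overall constant: it is fixed by carrying the $-\frac{1}{2}$ of (\ref{axy}) together with the $(-1)^{j+1}$, and the natural check is that specializing $s=1$ must reproduce $\frac{1}{2}\wgti(R(X,\ndg Y)\dg-R(\ndg X,Y)\dg)$ of Theorem \ref{old1.6}.

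The main obstacle I anticipate is not the algebra but the existence of $A_XY$ as a genuine vector field rather than a mere functional of $Z$: the right side of (\ref{axy}) must extend to a continuous linear functional in the $H^s$-norm, which is exactly the order $\le -1$ estimate established in Lemma \ref{insert3}. That estimate places each summand in $H^{s'}\subset H^s$, so $\wgtsi$ can be pulled outside the curvature terms without loss of regularity and $A_XY$ is well defined; adding the three closed forms then yields the stated formula.
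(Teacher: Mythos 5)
Your proposal is correct and follows the paper's own route exactly: it feeds the bracket expansion (\ref{bracket}) of Lemma \ref{bracketterms} into the general formula of Lemma \ref{lem: LCs}, and identifies $A_XY$ from the chain of pairings in the proof of Lemma \ref{insert3} via the defining relation (\ref{axy}), which is precisely how the paper assembles Theorem \ref{thm:sinz}. In fact your insistence on carrying the $-\frac{1}{2}$ of (\ref{axy}) through the $(-1)^{j+1}$ and checking against $s=1$ is more careful than the paper itself: executed as you describe, it yields $A_XY=\frac{1}{2}\sum_{k=1}^s(-1)^k\left(\begin{array}{c}s\\k\end{array}\right)\sum_{j=0}^{2k-1}(-1)^{j}\,\wgtsi R(\ndg^{2k-1-j}X,\ndg^jY)\dg$, which at $s=1$ reproduces $\frac{1}{2}\wgti\left(R(X,\ndg Y)\dg-R(\ndg X,Y)\dg\right)$ from Theorem \ref{old1.6}, whereas the paper's displayed $A_XY$ (and hence the last line of Theorem \ref{thm:sinz} as printed) drops that factor of $-\frac{1}{2}$ and so disagrees with its own $s=1$ result by a factor of $-2$ --- your consistency check is exactly the device that detects this slip.
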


\subsection{The Levi-Civita Connection for General $s>\frac{1}{2}$}

${}$
\medskip

In this subsection, we show that the $H^s$ Levi-Civita connection for general $s>\frac{1}{2}$ exists in the strong sense of Remark \ref{lcrem}.
The formula is less explicit than in the 
$s\in \Z^+$ case, but is good enough for symbol calculations.

By Lemma \ref{lem: LCs}, we have to examine the term $A_XY$, which, if it exists, is
 characterized by (\ref{axy}):
$$-\frac{1}{2}\ipo{[D_Z,\weight]X}{Y} = \ips{A_XY}{Z}$$
for $Z\in H^s$.  As explained in Remark \ref{lcrem}, we may take
$X, Y\in H^{s'-1}.$
Throughout this section we assume that $s'\gg s$.

The following lemma extends Lemma \ref{bracketterms}.
\begin{lem}\label{pdo}
 (i)   For fixed $Z\in H^{s'-1}$,  $[D_Z,\weight] X$ is a $\Psi$DO of
  order $2s-1$ in $X$. For ${\rm Re}(s)\neq 0$, the principal symbol of $[D_Z,\weight]$ is 
  linear in $s$.
  
  (ii) For fixed $X\in H^{s'-1}$, $[D_Z,\weight]X$ is a $\pdo$ 
  of order $2s-1$   in $Z$.
\end{lem}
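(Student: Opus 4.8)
The plan is to establish both parts of Lemma \ref{pdo} by analytic continuation in $s$, using the integer case (Lemma \ref{bracketterms}) as the anchor and the contour-integral construction of $\weight$ from \S\ref{pdoreview} as the main tool. The key structural fact is that $D_Z$ differs from the pointwise covariant derivative only by a zeroth-order term, so $[D_Z,\weight]$ detects how $\weight$ fails to commute with the flow of $Z$; since $\weight$ is built from $\Delta=-\nabla_{\dg}^2$ and $D_Z$ acts by the Leibniz rule through the Christoffel symbols, the commutator is governed entirely by $[D_Z,\Delta]=-\ndg(R(Z,\dg)\,\cdot\,)-R(Z,\dg)\ndg$, which is zeroth order by Corollary \ref{cor:zero}.

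For part (i), I would write $\weight=\frac{1}{2\pi i}\int_\Gamma (1+\lambda)^s(\wgt-\lambda)^{-1}\,d\lambda$ (a contour $\Gamma$ encircling the spectrum of $\wgt$, as in the construction reviewed in \S\ref{pdoreview}), and then commute $D_Z$ past the resolvent using the resolvent identity
\begin{equation*}
[D_Z,(\wgt-\lambda)^{-1}] = -(\wgt-\lambda)^{-1}[D_Z,\Delta](\wgt-\lambda)^{-1}.
\end{equation*}
Since $[D_Z,\Delta]$ is zeroth order and each resolvent lowers order by $2$, the integrand $[D_Z,(\wgt-\lambda)^{-1}]$ is order $-4$ in $\lambda$-weighted symbol terms; integrating $(1+\lambda)^s$ against this over $\Gamma$ produces, after the contour integral, a $\pdo$ whose leading order is $2s-1$, exactly one less than the order $2s$ of $\weight$ itself. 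The linearity of the principal symbol in $s$ comes from differentiating the symbol of $\weight$ at the principal level: the top symbol of $\weight$ is $(\sigma(\Delta))^s=(|\xi|^2)^s$ up to the trivialization, and commuting with $D_Z$ brings down one factor of $s$ from $\partial_s(|\xi|^2)^s = s(|\xi|^2)^{s-1}\log$-free leading term, so the principal symbol of the commutator carries a single overall factor of $s$. I would verify the normalization against the integer values supplied by (\ref{bracket}), where the top term in the $k=s$ summand is manifestly linear in $s$ through the binomial coefficient $\binom{s}{s}$ together with the overall structure, confirming consistency.

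Part (ii) is lighter: for fixed $X$, the operator $Z\mapsto [D_Z,\weight]X$ depends on $Z$ only through $D_Z$, hence through the zeroth-order factor $R(Z,\dg)$ appearing in $[D_Z,\Delta]$ by Corollary \ref{cor:zero}. Tracing this through the same resolvent expansion, the dependence on $Z$ enters via $\ndg^m Z$ for $m$ up to $2s-1$ (the derivatives of $Z$ produced by the $\ndg$'s flanking $R(Z,\dg)$ inside the iterated resolvent), so as an operator in $Z$ it is again a $\pdo$ of order $2s-1$; in the integer case this is visible directly from (\ref{bracket}), where the highest derivative falling on $X$ or, symmetrically, on $Z$ is $\ndg^{2s-1}$.

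The main obstacle I anticipate is controlling the subprincipal behavior of the contour integral for non-integer $s$ rigorously enough to conclude the order is \emph{exactly} $2s-1$ rather than merely $\le 2s-1$, and to justify that the principal symbol is genuinely linear (not just polynomial or transcendental) in $s$. This requires a careful symbol expansion of the resolvent $(\wgt-\lambda)^{-1}$ uniformly in $\lambda$ along $\Gamma$, an interchange of the contour integral with the asymptotic symbol expansion, and an estimate showing the remainder is smoothing. I would handle this by working with the full symbol expansion term by term, integrating each homogeneous component against $(1+\lambda)^s$ over $\Gamma$ using the standard Cauchy-type evaluations (as in \cite{See}), and isolating the top homogeneous term to read off both the order and the explicit linear-in-$s$ coefficient.
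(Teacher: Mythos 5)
Your skeleton for part (i)---Seeley's contour integral plus the resolvent identity $[D_Z,(\wgt-\lambda)^{-1}]=-(\wgt-\lambda)^{-1}[D_Z,\Delta](\wgt-\lambda)^{-1}$---is essentially the paper's own proof (the paper splits $D_Z=\delta_Z+\Gamma\cdot Z$, notes $[\delta_Z,\weight]=\delta_Z\weight$, and varies the resolvent, which is your identity in two pieces). But your order bookkeeping contains a genuine error. The operator $[D_Z,\Delta]=-\ndg(R(Z,\dg)\,\cdot\,)-R(Z,\dg)\ndg$ is \emph{first} order, not zeroth order, whatever the loose phrasing of Corollary \ref{cor:zero} suggests: its principal symbol is $-2iR(Z,\dg)\xi$ (equivalently, $\delta_Z\Delta$ has symbol linear in $\xi$, since the leading term $-\partial_\theta^2$ of $\Delta$ is $\gamma$-independent). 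This matters quantitatively: with a genuinely zeroth-order middle factor, your ``order $-4$'' integrand would yield $\frac{i}{2\pi}\int_\Gamma\lambda^s\,b_0\,(\xi^2+1-\lambda)^{-2}\,d\lambda=s\,b_0\,(\xi^2+1)^{s-1}$, i.e.\ order $2s-2$, contradicting the $2s-1$ you claim. The correct count is a first-order middle factor, hence an integrand of order $-3$ (this is exactly the paper's operator $A$), hence order $2s-1$; and the factor of $s$ arises from the integration by parts $\int_\Gamma\lambda^s(\xi^2+1-\lambda)^{-2}d\lambda=s\int_\Gamma\lambda^{s-1}(\xi^2+1-\lambda)^{-1}d\lambda$, as in (\ref{tsmo})---not from your formula ``$\partial_s(|\xi|^2)^s=s(|\xi|^2)^{s-1}$,'' which is incorrect as written (the $s$-derivative produces a logarithm; you mean the chain rule in the base, $\partial_u u^s=su^{s-1}$). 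Separately, the contour representation and the interchange of $D_Z$ with $\int_\Gamma$ are valid only for ${\rm Re}(s)<0$, whereas the lemma is needed precisely for $s>\frac12$; the paper covers ${\rm Re}(s)>0$ by the additional step (\ref{abc}), $\delta_Z\weight=-\weight\circ\delta_Z\weightinv\circ\weight$, which your proposal never supplies. (Your concern about the order being \emph{exactly} $2s-1$ is moot: the paper stipulates that ``of order $2s-1$'' means ``of order at most $2s-1$.'')

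Part (ii) is where the gap is a missing idea rather than a slip. Your description of the $Z$-dependence---``$\ndg^m Z$ for $m$ up to $2s-1$''---does not parse for non-integer $s$, and is wrong even heuristically in the resolvent picture, where only $Z$ and $\ndg Z$ appear in the middle factor; the higher derivatives in (\ref{bracket}) come from the binomial expansion of $\wgts$, which is unavailable here. More importantly, you never confront the one term that actually threatens the bound: writing $D_Z=\delta_Z+\Gamma\cdot Z$, the Christoffel part contributes the operator $Z\mapsto\int e^{i(\theta-\theta')\cdot\xi}p(\theta,\xi)\Gamma(\gamma(\theta'))Z(\theta')X(\theta')\,d\theta'\,d\xi$ (the last term of (\ref{221})), whose amplitude has order $2s$ in $Z$, not $2s-1$. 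The paper's proof spends most of its effort exactly there: the amplitude-to-symbol reduction (\ref{222})--(\ref{223a}) identifies the would-be principal symbol as $p(\theta,\eta)^a_e\Gamma^e_{bc}(\gamma(\theta))X^c(\theta)$, which vanishes in Riemannian normal coordinates centered at $\gamma(\theta)$, and only this cancellation lowers the order to $2s-1$. To be fair, a fully executed resolvent route could sidestep that cancellation---the integrand $(\wgt-\lambda)^{-1}[D_Z,\Delta](\wgt-\lambda)^{-1}X$ is manifestly first order in $Z$, with coefficients decaying like $|\lambda|^{-1}$ through $(\wgt-\lambda)^{-1}X$, so with uniform parametric estimates this would give a genuinely different proof of (ii)---but as written you assert the conclusion rather than prove it, and again only in the ${\rm Re}(s)<0$ regime.
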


As usual, ``of order $2s-1$" means ``of order at most $2s-1.$"

\begin{proof}
(i) For $f:LM\to \BbC$, we get $[D_Z,\weight]fX = f[D_Z,\weight]X$, since $[f,\weight]=0.$  
Therefore, $[D_Z,\weight]X$ depends only on $X|_\gamma.$

By Lemma \ref{lem:l2lc}, $D_Z = \delta_Z + \Gamma \cdot Z$ in shorthand exponential 
coordinates.  The Christoffel symbol term is zeroth order and $\weight$ has scalar leading order symbol, so $[\Gamma\cdot Z,\weight]$ has order $2s-1.$  

From the integral expression for 
$\weight$, it is immediate that 
\begin{eqnarray}\label{immediate}
[\delta_Z,\weight]X &=& (\delta_Z\weight) X + \weight\delta_Z X - \weight\delta_ZX\\
&=& (\delta_Z\weight) X.\nonumber
\end{eqnarray}
$\delta_Z\weight$ is a limit of differences of $\pdo$s on bundles isomorphic to $\gamma^*TM$.
Since the algebra of $\pdo$s is closed in the Fr\'echet topology
of all $C^k$ seminorms
of symbols and smoothing terms
on compact sets, $\delta_Z\weight$ is a $\pdo.$

Since $\weight$ has order $2s$ and has scalar leading order symbol, 
$[D_Z,\weight]$ have order $2s-1$.  For later purposes (\S3.2), we compute some explicit symbols.  

Assume Re$(s)<0.$  As in the construction of $\weight$,
 we will compute what the symbol asymptotics
of $\delta_Z\weight$ should
be, and then construct an operator with these asymptotics.
From the functional calculus for unbounded operators, we have
\begin{eqnarray}\label{funcalc}
\delta_Z\weight &=& \delta_Z\left(\frac{i}{2\pi}\int_\Gamma
\lambda^s(1+\Delta-\lambda)^{-1}d\lambda\right)\nonumber\\
&=& \frac{i}{2\pi}\int_\Gamma
\lambda^s\delta_Z (1+\Delta-\lambda)^{-1}d\lambda\\
&=& -\frac{i}{2\pi}\int_\Gamma
\lambda^s (1+\Delta-\lambda)^{-1} (\delta_Z\Delta)
(1+\Delta-\lambda)^{-1}d\lambda,\nonumber
\end{eqnarray}
where $\Gamma$ is a contour around the spectrum of $1+\Delta$, and the
hypothesis on $s$ justifies the exchange of $\delta_Z$ and the integral.  The
operator $A =
(1+\Delta-\lambda)^{-1} \delta_Z\Delta  (1+\Delta-\lambda)^{-1}$ is a $\pdo$
of order $-3$
 with top order symbol
\begin{eqnarray*} \sigma_{-3}(A)(\theta,\xi)^\ell_j &=&
(\xi^2-\lambda)^{-1}\delta^\ell_k (-2Z^i\partial_i\ch{\nu}{\mu}{k}\dg^\nu
   -2\ch{\nu}{\mu}{k}\dot Z^\nu) \xi (\xi^2-\lambda)^{-1}\delta^\mu_j\\
&=&
(-2Z^i\partial_i\ch{\nu}{j}{\ell}\dg^\nu
   -2\ch{\nu}{j}{\ell}\dot Z^\nu)
\xi (\xi^2-\lambda)^{-2}.
\end{eqnarray*}
Thus the top order symbol of $\delta_Z\weight$ should be
\begin{eqnarray}\label{tsmo}
 \sigma_{2s-1}(\delta_Z\weight)(\theta,\xi)^\ell_j
&=& -\frac{i}{2\pi}\int_\Gamma
\lambda^s (-2Z^i\partial_i\ch{\nu}{j}{\ell}\dg^\nu
   -2\ch{\nu}{j}{\ell}\dot Z^\nu)
\xi (\xi^2-\lambda)^{-2} d\lambda  \nonumber\\
&=& \frac{i}{2\pi}\int_\Gamma s\lambda^{s-1}
(-2Z^i\partial_i\ch{\nu}{j}{\ell}\dg^\nu
   -2\ch{\nu}{j}{\ell}\dot Z^\nu)
\xi (\xi^2-\lambda)^{-1} d\lambda \nonumber\\
&=& s(-2Z^i\partial_i\ch{\nu}{j}{\ell}\dg^\nu
   -2\ch{\nu}{j}{\ell}\dot Z^\nu)\xi (\xi^2-\lambda)^{s-1}.
\end{eqnarray}
Similarly, all the terms in the symbol asymptotics for $A$ are of the form
$B^\ell_j \xi^n(\xi^2-\lambda)^m$ for some matrices $B^\ell_j =
B^\ell_j(n,m).$   This produces a symbol sequence $
\sum_{k\in \Z^+}\sigma_{2s-k}$,  and there exists a  $\pdo$ $P$ with $\sigma(P) =
\sum \sigma_{2s-k}$.  (As in \S\ref{pdoreview}, we 
first produce operators $P_i$ on a coordinate cover $U_i$ of $S^1$, 
and then set
$P = \sum_i\phi_iP_i\psi_i$.) The construction
depends on
the choice of local coordinates
covering $\gamma$, the partition of unity and cutoff
functions as above, and a cutoff function in $\xi$; as
usual, different choices change the operator by a smoothing operator.
Standard estimates 
show that $P-\delta_Z\weight$ is a smoothing
operator, which verifies explicitly that  $\delta_Z\weight$ is a $\pdo$ of order $2s-1.$

For Re$(s) >0$,  motivated by differentiating $\weightinv\circ\weight = {\rm
  Id}$, we set
\begin{equation}\label{abc}
\delta_Z\weight = -\weight\circ\delta_Z\weightinv\circ\weight.
\end{equation}
This is again a $\pdo$ of order $2s-1$ with principal symbol linear in $s$. 

 (ii) As a $\pdo$ of order $2s$, $\weight$ has the expression
 $$\weight X(\gamma)(\theta) = \int_{T^*S^1}
 e^{i(\theta-\theta')\cdot \xi} 
 p(\theta,\xi) X(\gamma)(\theta')d\theta' d\xi,$$
 where we omit the cover of $S^1$ and its partition of unity on the right hand side.
 Here $
 p(\theta,\xi)$ is the symbol of $\weight$, which has the asymptotic expansion
 $$
 p(\theta,\xi) \sim \sum_{k=0}^\infty p_{2s-k }(\theta,\xi).$$
 The covariant derivative along $\gamma$ on
$Y\in\Gamma(\gamma^*TM)$ is given by
\begin{eqnarray*}\frac{DY}{d\gamma} &=&
(\gamma^*\nabla^{M})_{\partial_\theta}(Y) =
\partial_\theta Y + (\gamma^*\omega^{M})(\partial_\theta)(Y)\\
&=& \partial_\theta(Y^i)\partial_i + \dot\gamma^t Y^r
\Gamma^j_{tr}\partial_j,
\end{eqnarray*}
where $\nabla^{M}$ is the Levi-Civita connection on $M$ and $\omega^{M}$ is the
connection one-form in exponential coordinates on $M$. 
For $\Delta =
(\frac{D}{d\gamma})^* \frac{D}{d\gamma}$, an integration by parts using the
formula
$\partial_tg_{ar} = \Gamma_{\ell t}^ng_{rn} + \Gamma_{rt}^ng_{\ell n}$  gives
$$(\Delta Y)^k = -\partial^2_\theta Y^k
-2\Gamma_{\nu\mu}^k\dot\gamma^\nu\partial_\theta Y^\mu -\left
( \partial_\theta\Gamma_{\nu\delta}^k\dot\gamma^\nu
+\Gamma_{\nu\delta}^k\ddot\gamma^\nu +
\Gamma_{\nu\mu}^k\Gamma_{\e\delta}^\mu\dot\gamma^\e\dot\gamma^\nu\right)
Y^\delta.$$
Thus $p_{2s}(\theta, \xi) =  |\xi|^2$ is independent of $\gamma$, but the lower order symbols  depend on 
 derivatives of both $\gamma$ and the metric on $M$. 
 
 We have
 \begin{eqnarray} [D_Z,\weight]X(\gamma)(\theta) &=&
 D_Z \int_{T^*S^1}
 e^{i(\theta-\theta')\cdot \xi} 
 p(\theta,\xi) X(\gamma)(\theta')d\theta' d\xi\label{216}\\
 &&\quad - \int_{T^*S^1}
 e^{i(\theta-\theta')\cdot \xi} 
 p(\theta,\xi) D_ZX(\gamma)(\theta')d\theta' d\xi. \label{217}
 \end{eqnarray}
 In local coordinates, (\ref{216}) equals
 \begin{eqnarray} \label{218}
 \lefteqn{
\left[ D_Z \int_{T^*S^1}
 e^{i(\theta-\theta')\cdot \xi} 
 p(\theta,\xi) X(\gamma)(\theta')d\theta' d\xi\right]^a}\nonumber\\
 &=& \delta_Z\left[
 \int_{T^*S^1}
 e^{i(\theta-\theta')\cdot \xi} 
 p(\theta,\xi) X(\gamma)(\theta')d\theta' d\xi\right]^a(\theta)\\
 &&\quad + \Gamma^a_{bc} Z^b(\gamma)(\theta) 
 \left[ \int_{T^*S^1}
 e^{i(\theta-\theta')\cdot \xi} 
 p(\theta,\xi) X(\gamma)(\theta')d\theta' d\xi\right]^c(\theta).\nonumber
 \end{eqnarray}
 Here we have suppressed matrix indices in $p$ and $X$.
We can bring $\delta_Z$ past the integral on the right hand side of (\ref{218}).  If
$\gamma_\epsilon$ is a family of curves with $\gamma_0 = \gamma, \dot\gamma_\epsilon = Z$, then
$$\delta_Zp(\theta, \xi) = \frac{d}{d\epsilon}\biggl|_{_{_{\epsilon=0}}} p(\gamma_\epsilon,
\theta,\xi) = \frac{d\gamma_\epsilon^k}{d\epsilon}\biggl|_{_{_{\epsilon=0}}}
\partial_k p(\gamma,\theta, \xi) = Z^k(\gamma(\theta))\cdot \partial_k p(\theta,\xi).$$ Substituting this into (\ref{218}) gives
\begin{eqnarray}\label{219}
\lefteqn{
\left[ D_Z \int_{T^*S^1}
 e^{i(\theta-\theta')\cdot \xi} 
 p(\theta,\xi) X(\gamma)(\theta')d\theta' d\xi\right]^a}\nonumber\\
&=& 
\lefteqn{
\left[ \int_{T^*S^1}
 e^{i(\theta-\theta')\cdot \xi} 
 Z^k(\gamma)(\theta)\cdot \partial_k p(\theta,\xi) X(\gamma)(\theta')d\theta' d\xi\right]^a}\\
 &&\quad 
+ \Gamma^a_{bc} Z^b(\gamma)(\theta) 
 \left[ \int_{T^*S^1}
 e^{i(\theta-\theta')\cdot \xi} 
 p(\theta,\xi) X(\gamma)(\theta')d\theta' d\xi\right]^c(\theta).\nonumber\\
&&\qquad + 
 \left[ \int_{T^*S^1}
 e^{i(\theta-\theta')\cdot \xi} 
 p(\theta,\xi) \delta_Z X(\gamma)(\theta')d\theta' d\xi\right]^a(\theta).\nonumber
\end{eqnarray}
Similarly, (\ref{217}) equals
\begin{eqnarray}\label{220}
\lefteqn{\left[\int_{T^*S^1}
 e^{i(\theta-\theta')\cdot \xi} 
 p(\theta,\xi) D_ZX(\gamma)(\theta')d\theta' d\xi\right]^a}\nonumber\\
 &=& \left[\int_{T^*S^1}
 e^{i(\theta-\theta')\cdot \xi} 
 p(\theta,\xi) \delta_ZX(\gamma)(\theta')d\theta' d\xi\right]^a\\
 &&\quad + 
 \int_{T^*S^1}
 e^{i(\theta-\theta')\cdot \xi} 
 p(\theta,\xi)^a_e\Gamma^e_{bc}Z^b(\gamma)(\theta') X^c(\gamma)(\theta')d\theta' d\xi.
 \nonumber
 \end{eqnarray}
 Substituting (\ref{219}), (\ref{220}), into (\ref{216}), (\ref{217}), respectively, gives
 \begin{eqnarray}\label{221}
\lefteqn{( [D_Z,\weight]X(\theta))^a}\\
&=&
Z^b(\theta)\cdot\left[\int_{T^*S^1} e^{i(\theta-\theta')\cdot\xi}\left(\partial_bp^a_e(\theta,\xi) +
\Gamma_{bc}^a(\gamma(\theta)p_e^c(\theta, \xi)\right)X^e(\theta')d\theta'd\xi\right]\nonumber\\
&&\quad - \int_{T^*S^1} e^{i(\theta-\theta')\cdot\xi}p(\theta,\xi)^a_e\Gamma_{bc}^e
(\gamma(\theta'))Z^b(\theta') X^c(\theta')d\theta' d\xi,\nonumber
  \end{eqnarray}
 where $X(\theta') = X(\gamma)(\theta)$ and similarly for Z.
 
 The first term on the right hand side of (\ref{221}) is order zero in $Z$; note that
 $0<2s-1$, since $s>\frac{1}{2}$.  For the last term in (\ref{221}), we do a change of variables typically used in the proof that the composition of $\pdo$s is a $\pdo.$  Set
 \begin{equation}\label{221a}q(\theta, \theta', \xi)^a_b = p(\theta,\xi)^a_e \Gamma_{bc}^e(\gamma(\theta'))X^c
 (\theta'),
 \end{equation}
 so the last term equals
 \begin{eqnarray*}
(PZ)^a(\theta) &\stackrel{\rm def}{=}&  \int_{T^*S^1} e^{i(\theta-\theta')\cdot\xi}q(\theta, \theta', \xi)^a_b Z^b(\theta') d\theta' d\xi\\
 &=& \int_{T^*S^1} e^{i(\theta-\theta')\cdot\xi}q(\theta, \theta', \xi)^a_b e^{i(\theta'-\theta'')
 \cdot\eta} Z^b(\theta'') d\theta'' d\eta \ d\theta' d\xi,
 \end{eqnarray*}
 by applying  Fourier transform and its inverse to $Z$. A little algebra gives
 \begin{equation}\label{222}
 (PZ)^a(\theta) = \int_{T^*S^1} e^{i(\theta-\theta')\cdot\eta}r(\theta,\eta)^a_b Z^b(\theta')
 d\theta' d\eta,
 \end{equation}
 with 
 \begin{eqnarray*}r(\theta, \eta) &=& \int_{T^*S^1} e^{i(\theta-\theta')\cdot(\xi-\eta)}
 q(\theta,\theta', \xi) d\theta' d\xi\\
 &=&  \int_{T^*S^1} e^{it\cdot\xi} q(\theta,\theta - t, \eta + \xi) dt\  d\xi.
 \end{eqnarray*} 
 In the last line we continue to abuse notation by treating the integral in local coordinates in 
 $t = \theta-\theta'$ lying in an interval $I\subset \R$ and implicitly
 summing over a cover and partition of unity of $S^1;$ thus we can consider $q$ as a compactly supported function in $t\in\R.$
 Substituting in the Taylor expansion of $q(\theta,\theta - t, \eta + \xi)$ in $\xi$ gives in local coordinates
  \begin{eqnarray}\label{223a}
  r(\theta, \eta) &=& \int_{T^*\R} e^{it\cdot \xi} \left[ \sum_{\alpha, |\alpha|=0}^N
 \frac{1}{\alpha!} \partial_\xi^\alpha|_{\xi=0} q(\theta,\theta-t, \eta+\xi)\xi^\alpha + {\rm O}
 (|\xi|^{N+1})\right] dt \  d\xi\nonumber\\
 &=& \sum_{\alpha, |\alpha|=0}^N \frac{i^{|\alpha|}}{\alpha!} \partial^\alpha_t\partial^\alpha
 _\xi q(\theta, \theta, \eta) + {\rm O} (|\xi|^{N+1}).
 \end{eqnarray}
 Thus $P$ in (\ref{222}) is a $\pdo$ with apparent top order symbol 
 $q(\theta, \theta, \eta)$, which by (\ref{221a}) has order $2s.$  The top order symbol can be computed in any local coordinates on $S^1$ and $\gamma^*TM$.  If we choose
 manifold coordinates (see \S2.3) which are 
Riemannian normal coordinates centered at $\gamma(\theta)$, the Christoffel symbols vanish at this point, and
 so
 $$q(\theta, \theta, \eta)^a_b = p(\theta,\xi)^a_e\Gamma_{bc}^e(\gamma(\theta)) X^c(\theta)
 =0.$$
 Thus $P$ is in fact of order $2s-1$, and so both terms on the right hand side of (\ref{221}) have order at most $2s-1$.

\end{proof}

\begin{rem} (i) For $s\in\Z^+$,  $\delta_Z\weight$ differs
from the usual definition by a smoothing operator.  

(ii) For all $s$, the proof of Lemma \ref{pdo}(i) shows that for all $k$, 
$\sigma_k(\delta_Z\weight) = \delta_Z(\sigma_k(\weight))$ in local coordinates.

\end{rem}

We can now complete the computation of the Levi-Civita connection for general $s.$

Let $[D_\cdot,\weight]X^*$ be the formal $L^2$ adjoint  of $[D_\cdot,\weight]X$.
We abbreviate $[D_\cdot,\weight]X^*(Y)$ by $[D_Y,\weight]X^*.$

\begin{thm}  \label{thm25} (i) For $s>\frac{1}{2}$, 
The Levi-Civita connection for the $H^s$ metric is given by
\begin{eqnarray}\label{quick}\nabla_X^sY &=& D_XY + \frac{1}{2}\wgtsi[D_X, \wgts]Y +
  \frac{1}{2}\wgtsi[D_Y, \wgts]X \nonumber\\
&&\quad -\frac{1}{2} \wgtsi[D_Y,\weight]X^*.
\end{eqnarray}

(ii) The connection one-form $\omega^s$ in exponential coordinates is given by
\begin{eqnarray}\label{223}\lefteqn{\omega^s_X(Y)(\gamma) (\theta)}\\
&=& \omega^M(Y)(\gamma(\theta)) + 
\left(\frac{1}{2}\wgtsi[D_X, \wgts]Y +
  \frac{1}{2}\wgtsi[D_Y, \wgts]X \right.\nonumber\\
  &&\quad \left.
-\frac{1}{2} \wgtsi[D_Y,\weight]X^*\right)(\gamma)(\theta).\nonumber
\end{eqnarray}

(iii) The connection one-form takes values in zeroth order $\pdo$s.
\end{thm}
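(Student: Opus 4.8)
The plan is to start from Lemma \ref{lem: LCs}, which already expresses $\nabla^s_XY$ as $D_XY + \frac12\wgtsi[D_X,\wgts]Y + \frac12\wgtsi[D_Y,\wgts]X + A_XY$ and reduces part (i) to identifying the term $A_XY$, characterized by (\ref{axy}):
$$-\tfrac{1}{2}\ipo{[D_Z,\weight]X}{Y} = \ips{A_XY}{Z},\qquad Z\in H^s.$$
The key structural input is Lemma \ref{pdo}(ii): for fixed $X\in H^{s'-1}$ the assignment $T\colon Z\mapsto [D_Z,\weight]X$ is a classical $\pdo$ of order $2s-1$ \emph{in the variable $Z$}. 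Hence its formal $L^2$ adjoint $T^*$ exists and is again a classical $\pdo$ of order $2s-1$, which in the notation fixed before the theorem is $T^*(Y)=[D_Y,\weight]X^*$.

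First I would transfer $Z$ out of $T$ using the adjoint. With the symmetric $L^2$ pairing used throughout (as in Lemma \ref{lem:l2lc} and Lemma \ref{insert3}),
$$-\tfrac{1}{2}\ipo{[D_Z,\weight]X}{Y} = -\tfrac{1}{2}\ipo{TZ}{Y} = -\tfrac{1}{2}\ipo{T^*Y}{Z}.$$
On the other hand $\ips{A_XY}{Z}=\ipo{\wgts A_XY}{Z}$, since $\wgts$ is self-adjoint on $L^2$. Equating the two expressions for all $Z$ in a dense subspace gives $\wgts A_XY = -\tfrac12 T^*Y$, hence
$$A_XY = -\tfrac{1}{2}\wgtsi[D_Y,\weight]X^*,$$
which is exactly the last term of (\ref{quick}). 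Substituting into Lemma \ref{lem: LCs} proves (i). The strong-sense existence asserted in Remark \ref{lcrem} then follows from a single order count: $T^*$ has order $2s-1$, so $\wgtsi T^*$ has order $(2s-1)-2s=-1$; therefore for $Y\in H^{s'-1}$ we get $A_XY\in H^{s'}\subset H^{s'-1}$, so $\nabla^s_XY$ lands back in $H^{s'-1}(\gamma^*TM)$.

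Part (ii) is then immediate. In exponential coordinates $D_XY = \delta_XY + \omega^M_X(Y)$ pointwise along $\gamma$, by Lemma \ref{lem:l2lc} and the remark following it, so subtracting $\delta_XY$ from (\ref{quick}) leaves the connection one-form $\omega^s_X(Y)$ displayed in (\ref{223}). For (iii) I would do pure order bookkeeping using Lemma \ref{pdo}. The leading term $\omega^M_X(Y)^a=\Gamma^a_{bc}X^bY^c$ is multiplication by a matrix, a $\pdo$ of order $0$ in $Y$. Each of the three correction terms is $\wgtsi$ (order $-2s$) composed with an operator of order $2s-1$ in $Y$: namely $[D_X,\wgts]$ by Lemma \ref{pdo}(i), the operator $Y\mapsto[D_Y,\wgts]X$ by Lemma \ref{pdo}(ii), and the adjoint $Y\mapsto[D_Y,\weight]X^*$. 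Hence each correction has order $-1$, and $\omega^s_X$ is a $\pdo$ of order $0$ whose leading symbol comes entirely from $\omega^M$, proving (iii).

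The step I expect to require the most care is the adjoint identity underlying (i). One must keep straight that the order-$(2s-1)$ bound of Lemma \ref{pdo}(ii) is in the variable $Z$, so that passing to $T^*$ legitimately transfers an order-$(2s-1)$ operator onto $Y$, and that the formal adjoint of a classical $\pdo$ is again classical of the same order, so that $\wgtsi T^*$ is genuinely of order $-1$ rather than merely bounded. This single point simultaneously yields the explicit closed form for $A_XY$, the boundedness of $\omega^s_X$ on every $H^{s'-1}(\gamma^*TM)$, and the strong-sense existence of the connection; the remaining content of the theorem is formal given Lemmas \ref{lem: LCs} and \ref{pdo}.
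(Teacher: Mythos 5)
Your proposal is correct and follows essentially the same route as the paper: you identify $A_XY = -\frac{1}{2}\wgtsi[D_Y,\weight]X^*$ by passing to the formal $L^2$ adjoint of the order-$(2s-1)$ operator $Z\mapsto [D_Z,\weight]X$ from Lemma \ref{pdo}(ii), substitute into Lemma \ref{lem: LCs} for (i), deduce (ii) as in Corollary \ref{cor2}, and prove (iii) by the same order count (the paper handles the strong-sense existence in the remark following the theorem, exactly as in your final order bookkeeping). Your write-up is in fact slightly more careful than the paper's, which drops the factor $-\frac{1}{2}$ in the displayed identification of $A_XY$ inside its proof even though the theorem statement carries it.
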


\begin{proof}  Since $[D_Z,\weight]X$ is a $\pdo$ in $Z$ of order $2s-1$, its formal adjoint is
a $\pdo$ of the same order.  Thus
$$\langle [D_Z,\weight]X,Y\rangle_0 = \langle Z, [D_\cdot, \weight]X^*(Y)\rangle
= \langle Z, \wgtsi[D_Y,\weight]X^*\rangle_s.$$
Thus $A_XY$ in (\ref{axy}) satisfies
$A_XY = \wgtsi[D_Y,\weight]X^*.$  Lemma \ref{lem: LCs} applies to all $s>\frac{1}{2}$, 
so (i) follows.  (ii) follows as 
in Corollary \ref{cor2}.  Since $\omega^M$ is zeroth order and all 
other terms have order $-1$, (iii) holds as well.
\end{proof}

\begin{rem}  This theorem implies that the Levi-Civita connection exists for the 
$H^s$ metric in the strong sense:  for $X\in  T_\gamma LM =H^{s'-1}(\gamma^*TM)$
and $Y_\eta\in H^{s'-1}(\eta^*TM)$ a smooth vector field on $LM = H^{s'}(S^1,M)$,
 $\nabla^s_XY(\gamma)\in H^{s'-1}(\gamma^*TM).$  (See Remark
2.1.)  For each term except $D_XY$ on the right hand side of (\ref{quick}) is order
$-1$ in $Y$, and so takes $H^{s'-1}$ to $H^{s'}\subset H^{s'-1}.$  For $D_XY = \delta_XY + \Gamma\cdot Y$, $\Gamma$ is zeroth order and so bounded on $H^{s'-1}.$  Finally, 
the definition of a smooth vector field on $LM$ implies that $\delta_XY$ stays in $H^{s'-1}$
for all $X$.
\end{rem}

\subsection{{\bf Extensions of the Frame Bundle of $LM$}}\label{extframe}

In this subsection we discuss the choice of structure group for the
$H^s$ and Levi-Civita connections on $LM.$

Let $\calH$ be the Hilbert space
 $H^{s_0}(\gamma^*TM)$ for 
a fixed $s_0$ and $\gamma.$ 
 Let $GL(\calH)$ be the group of bounded invertible linear
operators on $\calH$; inverses of elements are bounded by the closed graph
theorem.   $GL(\calH)$ has the subset
topology of the norm topology on ${\mathcal B}(\calH)$, the bounded linear
operators on $\calH$.
$GL(\calH)$ is an infinite dimensional Banach Lie group, as a group which
is an open subset of the infinite dimensional Hilbert manifold 
${\mathcal B}(\calH)$
\cite[p.~59]{Omori}, and has Lie algebra 
${\mathcal B}(\calH)$. Let $\pdo_{\leq 0}, 
\pdo_0^*$ denote the algebra of classical
$\pdo$s of nonpositive order 
and the group of invertible zeroth order $\pdo$s, respectively,
where all $\pdo$s act on $\calH.$   
Note that $\pdo_0^*\subset GL(\calH).$
   
\begin{rem} 
The inclusions of $\pdo_0^*, \pdo_{\leq 0}$ into $GL(\calH), {\mathcal
    B}(\calH)$ are trivially continuous in the subset topology.
For the Fr\'echet topology on $\pdo_{\leq 0}$, 
the  inclusion is 
continuous as in \cite{lrst}.
\end{rem}

We recall
the relationship between
 the  connection one-form $\alpha_{FN}$ on the frame bundle $FN$ of a
 manifold $N$
and
local expressions for the connection on $TN.$ For $U\subset N$,
 let $\chi:U\to FN$ be a local section.
 A metric  connection $\nabla$ on $TN$ with local
connection one-form $\omega$ determines a connection $\alpha_{FN}\in
 \Lambda^1(FN, {\mathfrak o}(n))$ on $FN$
by {\it (i)} $\alpha_{FN}$ is the Maurer-Cartan one-form on each fiber,
and {\it (ii) }
$\alpha_{FN}(Y_u)=\omega (X_p),$ for $ Y_u=\chi_*X_p$
\cite[Ch.~8, Vol.~II]{Spi}, or equivalently
$\chi^*\alpha_{FN} = \omega.$

This applies to $N=LM.$
The frame bundle $FLM\to LM$ is constructed
as in the finite dimensional case. The
fiber over $\gamma$ is isomorphic to the gauge group $\calG$ of $\calR$
and fibers are glued by the transition functions for
$TLM$. Thus the frame bundle is
topologically a
$\calG$-bundle.

However, by Theorem \ref{thm25},
the Levi-Civita connection one-form $\omega^s_X$
takes
values in $\pdo_{\leq 0}$. 
The curvature two-form $\Omega^{s} = d_{LM}\omega^{s} + \omega^{s}\wedge
\omega^s$ also takes values in $\pdo_{\leq 0}.$  (Here $d_{LM}\omega^{s}(X,Y)$
is defined by the Cartan formula for the exterior derivative.)
These
forms should take values in the Lie algebra of the structure
group.  Thus we should extend the structure group to the Fr\'echet Lie group
 $\pdo_0^*$, since its Lie
algebra is $\pdo_{\leq 0}$  \cite{paycha}.
This leads to an extended
frame bundles, also denoted $FLM$. The  transition
 functions are unchanged, since 
$\calG \subset \pdo_0^*$.
 Thus $(FLM,\alpha^s = \alpha^s_{FLM})$ as a geometric
bundle (i.e.~as a  bundle with connection $\alpha^s$ associated to
$\nabla^{s}$) is a $\pdo_0^*$-bundle.

In summary, for the Levi-Civita connections we have
$$ \begin{array}{ccc}
\calG&\longrightarrow &FLM\\
& & \downarrow\\
& & LM
\end{array}
\ \ \ \ \ \ \ \ \ \ \ \ \ \ 
\begin{array}{ccc}
\pdo_0^*&\longrightarrow &(FLM,\alpha^s)\\
& & \downarrow\\
& & LM
\end{array}
$$

\begin{rem}\label{rem:ext}  If
 we extend the structure group of the frame bundle with
  connection from $\pdo_0^*$ to
  $GL(\calH)$, the frame bundle becomes trivial by Kuiper's theorem.  
Thus
there is a potential loss of information if 
we pass to the larger frame
  bundle.

The situation is similar to the following examples.  Let $E\to S^1$ be
the $GL(1,\R)$ (real line)
 bundle with gluing functions (multiplication by) $1$ at $1\in
S^1$ and $2$ at $-1\in S^1.$  $E$ is trivial as a $GL(1,\R)$-bundle, 
with global section $f$ with $\lim_{\theta\to -\pi^+}f(e^{i\theta}) = 1, 
f(1) = 1,
\lim_{\theta\to\pi^-}f(e^{i\theta}) = 1/2.$  
However, as a $GL(1,\Q)^+$-bundle, $E$ is nontrivial, as a
global section is locally constant. As a second example,
 let $E\to M$ be a nontrivial
$GL(n,\C)$-bundle. Embed $\C^n$ into a Hilbert space $\calH$, and extend $E$
to an $GL(\calH)$-bundle $\calE$ 
with fiber $\calH$ and with the  transition functions for $E$ (extended by the identity in
directions perpendicular to the image of $E$).  Then $\calE$ is
trivial.

\end{rem}

\section{{\bf The Loop Group Case}}

In this section, we relate our work to Freed's work on based loop groups
$\Omega G$
\cite{Freed}.  We find a particular representation of the loop algebra that
controls 
the order of the curvature of the $H^1$ metric on $\Omega G.$

$\Omega G\subset LG$ has tangent space $T_\gamma\Omega G
= \{X\in T_\gamma LG: X(0) = X(2\pi) = 0\}$ in some Sobolev topology.  
Instead of using 
$D^2/d\gamma^2$ to define the Sobolev spaces, the usual choice is
$\Delta_{S^1} = -d^2/d\theta^2$ coupled to the
identity operator on the Lie algebra ${\mathfrak g}$.  Since this operator has
no kernel on $T_\gamma\Omega M$, 
$1 + \Delta$ is replaced by
 $\Delta$.  These changes in the $H^s$ inner product
do not alter the spaces of Sobolev sections, but the $H^s$ metrics on $\Omega G$ are 
no longer induced from a metric on $G$ as in the previous sections.

This simplifies the calculations of the Levi-Civita connections.
In particular,\\
 $[D_Z,\Delta^s] = 0$, so there is no term $A_XY$ as in (\ref{axy}).  
As a result, one can work directly with the six term formula (\ref{5one}).
For $X, Y, Z$ left
invariant vector fields, the first three terms on the right hand side of 
(\ref{5one}) vanish. Under the standing assumption that $G$ has a left
invariant, 
Ad-invariant inner product,
one obtains
$$2\nabla^{(s)}_XY = [X,Y] + \Delta^{-s}[X,\Delta^sY] +
\Delta^{-s}[Y,\Delta^sX]$$
\cite{Freed}.

It is an interesting question to compute the order of the curvature operator
as a function of $s$.  For based loops, Freed proved that this order is at
most $-1$.  In \cite{andres}, it is
shown that the order of $\Omega^s$ is at most $-2$ for all $s\neq 1/2, 1$ on
both $\Omega G$ and $LG$, and is exactly $-2$ for $G$ nonabelian.  
 For the case $s=1$, we have a much stronger result.

\begin{prop} The curvature of the
Levi-Civita connection for the $H^1$ inner product on $\Omega
G$ associated to $-\frac{d^2}{d\theta^2}\otimes {\rm Id}$ is a $\pdo$ of order $-\infty.$
\end{prop}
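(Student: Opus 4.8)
The plan is to use the explicit formula for the Levi-Civita connection on $\Omega G$ just derived, namely $2\nabla^{(s)}_XY = [X,Y] + \Delta^{-s}[X,\Delta^sY] + \Delta^{-s}[Y,\Delta^sX]$, specialize to $s=1$ with $\Delta = -d^2/d\theta^2 \otimes {\rm Id}$, and then compute the curvature $\Omega^1(X,Y) = \nabla^1_X\nabla^1_Y - \nabla^1_Y\nabla^1_X - \nabla^1_{[X,Y]}$ on left-invariant vector fields. The key structural simplification is that since $\Delta$ acts as the scalar operator $-d^2/d\theta^2$ tensored with the identity on $\mathfrak g$, it commutes with the pointwise Lie bracket in the sense that derivatives distribute via the Leibniz rule; for left-invariant fields $X,Y$ the bracket $[X,Y]$ is again (pointwise) the Lie-algebra bracket. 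So first I would rewrite $2\nabla^1_XY$ purely in terms of $\theta$-derivatives and brackets, using $\Delta^{-1} = (-d^2/d\theta^2)^{-1}$ acting on loops in $\mathfrak g$ that vanish at the endpoints.

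Next I would compute each composite term in the curvature. The main point is that the curvature operator $\Omega^1(X,Y)$, viewed as an operator acting on a third vector field $Z$, will be built from compositions of $\Delta^{-1}$, $\Delta$, and the zeroth-order (multiplication-type) bracket operators $\mathrm{ad}_X, \mathrm{ad}_Y$. Generically such a composition has order $-2$ (as in the cited result of \cite{andres}), but the claim is that at $s=1$ all the positive- and finite-order pieces cancel, leaving a smoothing operator. I would carry this out by expanding $\nabla^1_X(\nabla^1_Y Z)$, applying the Leibniz rule $\Delta(uv)$-type identities $[\Delta, \mathrm{ad}_X] = \mathrm{ad}_{\ddot X}$-style commutators (here $\dot X = \partial_\theta X$, etc.), and tracking how the leading symbols combine. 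Because $\Delta$ is exactly $-\partial_\theta^2$ with \emph{no} lower-order terms and \emph{no} curvature contributions from $G$ (the metric is not induced from $G$), the symbol calculus is almost algebraic: the full symbol of $\Delta$ is simply $\xi^2$, with no subleading corrections, which should force the symbol asymptotics to terminate.

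The cleanest route to the $-\infty$ conclusion is to show directly that $\Omega^1(X,Y)$ is a \emph{finite-order differential} combination composed with $\Delta^{-1}$ in such a way that it equals a genuine smoothing operator, i.e.\ its total symbol vanishes to all orders. I would organize this by computing the full symbol of each term using $[\Delta, \mathrm{ad}_W] = -2\,\mathrm{ad}_{\dot W}\partial_\theta - \mathrm{ad}_{\ddot W}$ (valid because $\Delta = -\partial_\theta^2\otimes\mathrm{Id}$ commutes with the Lie bracket fiberwise), and then verifying that after forming the antisymmetrized combination and subtracting $\nabla^1_{[X,Y]}$, every symbol term $\sigma_k$ for $k \ge -\infty$ cancels. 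The Jacobi identity for $\mathfrak g$ and the exact scalar nature of $\Delta$ are what make this possible at the special value $s=1$, where the binomial coefficients in the expansion of $\Delta^{\pm 1}$ collapse to a single term.

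The hard part will be the bookkeeping in the symbol cancellation: one must verify not merely that the top symbol of $\Omega^1$ vanishes but that \emph{all} homogeneous components vanish, which requires using the Jacobi identity repeatedly and exploiting that the commutator $[\Delta, \mathrm{ad}_W]$ is itself a \emph{first-order} operator with no zeroth-order tail beyond $\mathrm{ad}_{\ddot W}$. I expect the cleanest argument avoids term-by-term symbol chasing by instead finding a closed-form expression for $\Omega^1(X,Y)Z$ as $\Delta^{-1}$ applied to an expression that is manifestly a \emph{smooth} (indeed polynomial-in-derivatives) combination of $X,Y,Z$ and their $\theta$-derivatives, whose image under $\Delta^{-1}$ lands in the kernel-complement in a way that the net operator is smoothing. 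Demonstrating that the apparent order-$(-2)$ and lower principal parts all cancel — so that no nonzero homogeneous symbol survives — is the crux; once that is established, an operator on $S^1$ whose full symbol vanishes is smoothing, hence of order $-\infty$, which is exactly the assertion.
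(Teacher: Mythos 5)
Your strategy coincides with the paper's second proof of this proposition: start from $2\nabla^{1}_XY=[X,Y]+\Delta^{-1}[X,\Delta Y]+\Delta^{-1}[Y,\Delta X]$, exploit that $\Delta=-\partial_\theta^2\otimes{\rm Id}$ has exact scalar symbol $\xi^2$ with no lower-order tail, and show that every homogeneous component of $\sigma(\Omega^1(X,Y))$ vanishes. But as written your proposal has a genuine gap precisely at the crux: the all-orders cancellation is asserted (``I expect,'' ``the hard part will be the bookkeeping'') rather than established, and that cancellation \emph{is} the proposition — for generic $s$ the same setup only yields order $-2$, so nothing short of the special $s=1$ mechanism closes the argument. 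The missing idea is the closed-form resummation available exactly at $s=1$: expanding $\Delta^{-1}[X,\Delta\,\cdot\,]$ and $\Delta^{-1}[\Delta X,\cdot\,]$ in the symbol calculus, the series collapse and the \emph{full} symbol of $\nabla^1_X$ resums to $\sigma(\nabla^1_X)\sim\mathrm{ad}\,W(X,\theta,\xi)$ with $W(X,\theta,\xi)=\sum_{\ell\ge 0}\frac{(-1)^\ell}{i^\ell}\,\partial_\theta^\ell X\,\xi^{-\ell}$; one then verifies that $X\mapsto\mathrm{ad}\,W(X)$ is a Lie algebra homomorphism into the algebra of formal symbols, whence $\sigma(\Omega^1(X,Y))\sim\mathrm{ad}\bigl([W(X),W(Y)]\bigr)-\mathrm{ad}\,W([X,Y])=0$ to all orders, and $\Omega^1(X,Y)$ is smoothing. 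A warning for your bookkeeping plan: the homomorphism property must be checked against the \emph{composition} product of symbols, $\sigma(AB)\sim\sum_k\frac{1}{k!}\,\partial_\xi^k\sigma(A)\,D_\theta^k\sigma(B)$, not pointwise multiplication. On Fourier modes $X=e^{in\theta}x$, $Y=e^{im\theta}y$ one has $W(X)=\frac{\xi}{\xi+n}e^{in\theta}x$, and the composition product gives $\frac{\xi+m}{\xi+n+m}\cdot\frac{\xi}{\xi+m}=\frac{\xi}{\xi+n+m}$, matching $W([X,Y])$, whereas the pointwise product gives $\frac{\xi^2}{(\xi+n)(\xi+m)}$ and the cancellation fails already at order $-2$. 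Your correct operator identity $[\Delta,\mathrm{ad}_W]=\mathrm{ad}_{\Delta W}-2\,\mathrm{ad}_{\dot W}\partial_\theta$ is a sound starting point, but turning it into vanishing of every $\sigma_{-k}$ without the closed form for $W$ is exactly the term-by-term chase you hoped to avoid, and the Jacobi identity alone will not organize the $\partial_\xi$--$\partial_\theta$ correction terms.

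If the resummation eludes you, note that the paper's \emph{first} proof sidesteps symbol calculus entirely and would repair your plan at low cost: Freed's formula $\langle\Omega(X,Y)Z,W\rangle_1=\bigl(\int_{S^1}[Y,\dot Z],\int_{S^1}[X,\dot W]\bigr)_{\mathfrak g}-(X\leftrightarrow Y)$ exhibits $\Omega^1(X,Y)$, after an integration by parts and inversion of $\Delta$, as an integral operator with smooth kernel $a^k_j(\theta,\theta')$. Its symbol $b^k_j(\theta,\xi)=\int_{S^1}a^k_j(\theta,\theta')e^{i(\theta-\theta')\cdot\xi}d\theta'$ is, in each chart of a partition of unity, the Fourier transform of a smooth compactly supported function of $\theta'$, hence rapidly decreasing in $\xi$ — so the operator has order $-\infty$ directly, with no cancellation bookkeeping whatsoever.
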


\noindent {\sc Proof:}
We give two quite different proofs.  

By \cite{Freed}, the $s=1$ curvature operator $\Omega = \Omega^{1}$
satisfies
$$\left\langle \Omega(X,Y)Z,W\right\rangle_1 = \left(\int_{S^1}[Y,\dot
Z],\int_{S^1}[X,\dot W]\right)_{\mathfrak g} - (X\leftrightarrow Y),$$
where  the inner product is the Ad-invariant form on the Lie algebra ${\mathfrak g}$.  We
want to write the right hand side of 
this equation as an $H^1$ inner product with $W$, in
order to recognize $\Omega(X,Y)$ as a $\pdo.$

Let $\{e_i\}$ be an orthonormal basis of ${\mathfrak g}$, considered
as a left-invariant frame of $TG$ and as global sections of $\gamma^*TG.$
 Let $\cc{i}{j}{k} = ([e_i,e_j],
e_k)_{\mathfrak g}$ be the structure constants of ${\mathfrak g}.$
(The Levi-Civita connection on left invariant vector fields
for the left invariant metric is
given by $\nabla_XY = \frac{1}{2}[X,Y]$, so the structure constants
are twice the Christoffel symbols.)  For $X = X^ie_i =
X^i(\theta)e_i, Y = Y^je_j,$ etc., integration by parts 
gives
$$\left\langle\Omega(X,Y)Z,W\right\rangle_1 = \left(\int_{S^1} \dot
Y^iZ^jd\theta\right)\left( \int_{S^1}\dot X^\ell W^m d\theta\right)
\cc{i}{j}{k}\cc{\ell}{m}{n}\delta_{kn} - (X\leftrightarrow Y).$$
Since
$$\int_{S^1}\cc{\ell}{m}{n}\dot X^\ell W^m =
\int_{S^1}\left(\delta^{mc}\cc{\ell}{c}{n}\dot X^\ell
e_m,W^be_b\right)_{\mathfrak g} = \left
\langle \Delta^{-1}(\delta^{mc}\cc{\ell}{c}{n} \dot X^\ell e_m),
W\right\rangle_1,$$
we get
\begin{eqnarray*}
\langle\Omega(X,Y)Z,W\rangle_1 &=& \left\langle
 \left[\int_{S^1} \dot Y^i Z^j\right]
\cc{i}{j}{k}\delta_{kn}\delta^{ms}\cc{\ell}{s}{n} \Delta^{-1}(\dot
X^\ell e_m),W\right\rangle_1- (X\leftrightarrow Y)\\
&=&\left\langle \left[ \int_{S^1}
a_j^k(\theta,\theta')Z^j(\theta')d\theta'\right] e_k,W\right\rangle_1,
\end{eqnarray*}
with
\begin{equation}\label{a}a_j^k(\theta,\theta') = \dot Y^i(\theta')
\cc{i}{j}{r}\delta_{rn}\delta^{ms}\cc{\ell}{s}{n} 
\left( \Delta
^{-1}( \dot X^\ell
e_m)\right)^k(\theta) - (X\leftrightarrow Y).
\end{equation}

We now show that $Z\mapsto \left(\int_{S^1}
a_j^k(\theta,\theta')Z^j(\theta')d\theta'\right)e_k$ is a smoothing
operator.  Applying Fourier transform and Fourier inversion to $Z^j$
yields
\begin{eqnarray*} \int_{S^1} a_j^k(\theta,\theta')Z^j(\theta')d\theta'
&=& \int_{S^1\times\R\times S^1}
a_j^k(\theta,\theta')e^{i(\theta'
-\theta'')\cdot\xi}Z^j(\theta'')d\theta''d\xi d\theta'\\
&=&
\int_{S^1\times\R\times S^1} \left[ a_j^k(\theta,\theta')e^{-i(\theta
-\theta')\cdot\xi}\right]e^{i(\theta
-\theta'')\cdot\xi}Z^j(\theta'')d\theta''d\xi d\theta',
\end{eqnarray*}
so $\Omega(X,Y)$ is a $\pdo $ with symbol 
\begin{equation}\label{b} b_j^k(\theta,\xi) =
\int_{S^1} a_j^k(\theta,\theta') \eee d\theta',
\end{equation}
with the usual mixing of local and global notation.

For fixed $\theta$,
(\ref{b})  contains the Fourier transform of $\dot Y^i(\theta')$  and $\dot X^i(\theta')$, as
these are the only $\theta'$-dependent terms in (\ref{a}).
Since the
Fourier transform is taken in a local chart with respect to a
partition of unity, and since in each chart $\dot Y^i$ and $\dot X^i$ times the
partition of unity function is compactly supported, the Fourier
transform of $a_j^k$ in each chart is rapidly decreasing.  Thus
$b_j^k(\theta,\xi)$ is the product of a rapidly decreasing function
with $e^{i\theta\cdot\xi}$, and hence is of order $-\infty.$

We now give a second proof.  For all $s$,
$$\nabla_X Y = \frac{1}{2}[X,Y] -\frac{1}{2} \Delta^{-s}[\Delta^sX,Y]
+\frac{1}{2}\Delta^{-s}[X,\Delta^sY].$$
Label the terms on the right hand side (1) -- (3).
 As an operator on $Y$ for fixed $X$, the symbol of (1) is
$\sigma((1))^a_\mu = \frac{1}{2}X^ec_{\e\mu}^a.$
 Abbreviating $\xii{-s}$ by $\xi^{-2s}$, we have
\begin{eqnarray*} \sigma((2))^a_\mu &\sim & -\frac{1}{2}c_{\e\mu}^a
\left[ \xi^{-2s}\Delta^sX^\e -\frac{2s}{i}\xi^{-2s-1}
\partial_\theta\Delta^s X^\e  \right.\\
&&\ \ \  \left. +\sum_{\ell=2}^\infty\frac{(-2s)(-2s-1)
\ldots(-2s-\ell+1)}{i^\ell \ell!}\xi^{-2s-\ell}
\partial_\theta^\ell\Delta^s X^\e \right]\\
\sigma((3))^a_\mu &\sim &  \frac{1}{2}c_{\e\mu}^a
\left[ X^\e+ \sum_{\ell=1}^\infty \frac{(-2s)(-2s-1)
\ldots(-2s-\ell+1)}{i^\ell \ell!} \xi^{-\ell}\partial_\theta^\ell X^\e\right].
\end{eqnarray*}
Thus
\begin{eqnarray}\label{fourone}
\sigma(\nabla_X)^a_\mu  &\sim& \frac{1}{2}c_{\e\mu}^a\left[ 2X^\e
   -\xi^{-2s}\Delta^sX^\e
+\frac{2s}{i}
\xi^{-2s-1}\partial_\theta\Delta^sX^\e\right. \nonumber\\
&&\ \ \
   -\sum_{ \ell=2}^\infty\frac{(-2s)(-2s-1)\ldots(-2s-\ell+1)}{i^\ell \ell!}
\xi^{-2s-\ell}\partial_\theta^\ell\Delta^s X^\e \\
&&\ \ \  \left. + \sum_{\ell=1}^\infty \frac{(-2s)(-2s-1)
\ldots(-2s-\ell+1)}{i^\ell \ell!} \xi^{-\ell}\partial_\theta^\ell
 X^\e. \right].\nonumber
\end{eqnarray}

Set $s=1$ in (\ref{fourone}), and replace $\ell$
by
$\ell-2$ in the first infinite sum.  Since $\Delta = -\partial_\theta^2$, a
little algebra gives
\begin{equation}\label{fourtwo}
\sigma(\nabla_X)^a_\mu \sim c_{\e\mu}^a\sum_{\ell=0}^\infty
\frac{(-1)^\ell}{i^\ell}
\partial_\theta^\ell X^\e\xi^{-\ell}
=  \ad\left( \sum_{\ell=0}^\infty
\frac{(-1)^\ell}{i^\ell}\partial_\theta^\ell
X\xi^{-\ell}
\right).
\end{equation}

Denote the infinite sum in the last term of (\ref{fourtwo})
by $W(X,\theta,\xi)$. The map
$X\mapsto W(X,\theta,\xi)$ takes the  Lie algebra of left invariant vector
fields on $LG$ to the Lie algebra
$L{\mathfrak g}[[\xi^{-1}]], $
the space of formal $\pdo$s of nonpositive integer order on the trivial bundle
$S^1\times{\mathfrak g} \to S^1$, where the Lie bracket on the
target involves multiplication of power series and bracketing in
${\mathfrak g}.$  We claim that this map is a Lie algebra homomorphism.
Assuming this, we see that
\begin{eqnarray*} \sigma\left(\Omega(X,Y)\right) &=&
  \sigma\left([\nabla_X,\nabla_Y] -\nabla_{[X,Y]}\right)
\sim \sigma\left( [\ad W(X), \ad W(Y)] - \ad W([X,Y]) \right)\\
&=& \sigma\left( \ad ( [W(X), W(Y)]) - \ad W([X,Y]) \right) = 0,
\end{eqnarray*}
which proves that $\Omega(X,Y)$ is a smoothing operator.

To prove the claim,
set $X = x^a_n\eff{n}e_a, Y =y^b_m\eff{m}e_b$. 
Then
\begin{eqnarray*} W([X,Y]) &=&
 W( x^ny^m\eff{(n+m)}c_{ab}^k e_k) =\sum_{\ell=0}^\infty \frac{(-1)^\ell}
{i^\ell } c_{ab}^k \partial_\theta^\ell
 \left(x^a_ny^b_m\eff{(n+m)}\right) \xi^{-\ell}e_k\\
  {[} W(X)  ,   W(Y)]
&=& \sum_{\ell=0}^\infty \sum_{p+q = \ell}
\frac{(-1)^{p+q}}{i^{p+q}} \partial_\theta^p \left(
x^a_n\eff{n}\right) \partial_\theta^q
\left( y^b_m\eff{m}\right)\xi^{-(p+q)}c_{ab}^k e_k,
\end{eqnarray*}
and these two sums are clearly equal.
\hfill $\Box$

\bigskip

It would be interesting to understand how the map $W$ fits into the
representation theory of the loop algebra $L{\calg}.$

\appendix
\section{{\bf Local Symbol Calculations}}\label{localsymbols}

We  compute the $0$ and $-1$ order symbols of the
connection one-form $\omega^1$ and the curvature two-form $\Omega^1$ of
 the $s=1$ Levi-Civita connection. 
 We also compute the $0$ and $-1$ order symbols of the
connection one-form for the general $s>\frac{1}{2}$ connection, and the $0$ order symbol of the 
curvature of the general $s$ connection.
 The formulas  show that
the $s$-dependence of these symbols is
linear, which will be used to define regularized (i.e. $s$-independent) Wodzicki-Chern-Simons classes
in \cite{MRT2}.

\subsection{{\bf Connection and Curvature Symbols for $s=1$}}
${}$
\medskip


Using Corollary \ref{cor2}, we can compute these symbols easily in manifold coordinates. 

\begin{lem} \label{old2.1}
(i) At $\gamma(\theta)$,
$\sigma_0(\omega^1_X)^a_b =  (\omega^M_X)^a_b = \chw{c}{b}{a}X^c.$

(ii)  \begin{eqnarray*}
\frac{1}{i|\xi|^{-2}\xi}\sigma_{-1}(\omega^1_X) &=& \frac{1}{2}(-2R(X,\dg)
-R(\cdot,\dg)X + R(X,\cdot)\dg).
\end{eqnarray*}
Equivalently,
\begin{eqnarray*}
\frac{1}{i|\xi|^{-2}\xi}\sigma_{-1}(\omega^1_X)^a_b &=& 
\frac{1}{2}
(-2R_{cdb}^{\ \ \ a} -R_{bdc}^{\ \ \ a} + R_{cbd}^{\ \ \  a})X^c\dg^d.
\end{eqnarray*}
\end{lem}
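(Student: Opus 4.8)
The plan is to read both symbols off the closed formula for $\omega^1_X$ supplied by Corollary \ref{cor2}. Write that formula as $\omega^1_X(Y) = \omega^M_X(Y) + \frac{1}{2}\wgti B(Y)$, where $B(Y)$ is the six-term bracket
\[
B(Y) = -\ndg(R(X,\dg)Y) - R(X,\dg)\ndg Y - \ndg(R(Y,\dg)X) - R(Y,\dg)\ndg X + R(X,\ndg Y)\dg - R(\ndg X,Y)\dg.
\]
The first step is to record orders. The term $\omega^M_X$ is multiplication by $\chw{c}{b}{a}X^c$, hence a zeroth order $\pdo$ supported in symbol degree $0$. Since $R$ is zeroth order and $\ndg$ is first order, each summand of $B$ is a $\pdo$ of order at most $1$ in $Y$, so $B$ has order $1$ and $\frac{1}{2}\wgti B$ has order $-1$. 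This already yields (i): the correction contributes nothing in degree $0$, so $\sigma_0(\omega^1_X) = \sigma_0(\omega^M_X) = (\omega^M_X)^a_b = \chw{c}{b}{a}X^c$.

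For (ii) I would compute $\sigma_{-1}$ of $\frac{1}{2}\wgti B$ from the symbol-composition asymptotics. The only contribution in degree $-1$ is the leading product $\frac{1}{2}\,\sigma_{-2}(\wgti)\,\sigma_1(B)$: the next term $\partial_\xi\sigma(\wgti)\,\partial_\theta\sigma(B)$ already lands in degree $-2$, as do all further corrections, so none of them interfere. Using $\sigma_{-2}(\wgti) = |\xi|^{-2}$ (the inverse of the leading symbol $|\xi|^2$ of $\wgt = 1+\Delta$ with $\Delta = -\ndg^2$), it remains to extract the order-$1$ part $\sigma_1(B)$. Working in manifold coordinates that are Riemannian normal at $\gamma(\theta)$, so that $\sigma(\ndg) = i\xi + (\text{order }0)$ there, I would apply the Leibniz rule to each summand to isolate the piece where $Y$ is differentiated exactly once. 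The two summands $R(Y,\dg)\ndg X$ and $R(\ndg X,Y)\dg$ are algebraic (order $0$) in $Y$ and drop out. In the remaining four, the order-$1$ contributions are $-i\xi R(X,\dg)$ (appearing twice, once directly from the second summand and once from the Leibniz split of the first), $-i\xi R(\cdot,\dg)X$ (from the third, reading $Y\mapsto R(Y,\dg)X$ as a zeroth order endomorphism precomposed with $\ndg$), and $+i\xi R(X,\cdot)\dg$ (from the fifth), so that
\[
\sigma_1(B) = i\xi\left(-2R(X,\dg) - R(\cdot,\dg)X + R(X,\cdot)\dg\right).
\]
Multiplying by $\frac{1}{2}|\xi|^{-2}$ gives the stated invariant formula, and the index version follows by expanding the three curvature endomorphisms in components using the convention $(R(X,\dg)Y)^a = \curv{jbc}{a}X^j\dg^bY^c$ fixed earlier.

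The main obstacle is the bookkeeping in $\sigma_1(B)$: one must correctly apply Leibniz to separate the differentiated-$Y$ contributions from the undifferentiated ones, and verify that the subleading terms of the composition asymptotics genuinely miss degree $-1$ so that $\sigma_{-1}(\omega^1_X)$ is exactly $\frac{1}{2}|\xi|^{-2}\sigma_1(B)$. Everything else, including passing from the three endomorphisms to the explicit coefficients $-2\curv{cdb}{a} - \curv{bdc}{a} + \curv{cbd}{a}$ via the curvature index symmetries, is routine index manipulation once these conventions are pinned down.
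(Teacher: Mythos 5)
Your proposal is correct and follows essentially the same route as the paper: both read the symbols off Corollary \ref{cor2}, use the Leibniz rule to isolate the four summands contributing at order $1$ in $Y$ (namely $-i\xi R(X,\dg)$ twice, $-i\xi R(\cdot,\dg)X$, and $+i\xi R(X,\cdot)\dg$, with $\sigma_1(\ndg)=i\xi$), and multiply by the leading symbol $|\xi|^{-2}$ of $\wgti$. Your explicit check that the subleading terms of the composition asymptotics land in degree $-2$ is a small point the paper leaves implicit, but it does not change the argument.
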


\begin{proof} (i) For $\sigma_0(\omega^1_X)$, the only term in (\ref{two}) of order
  zero is the Christoffel term.  

(ii) For $\sigma_{-1}(\omega^1_X)$, label the last six terms on the right hand side of
(\ref{two}) by (a), ..., (f).  By Leibniz rule for the tensors, the only
terms of order $-1$ come from:
in (a), $-\nabla_{\dg}(R(X,\dg)Y) = -R(X, \dg) \nabla_{\dg}Y +$ lower order in
$Y$;
in (b), the term  $-R(X, \dg) \nabla_{\dg}Y$;
in (c), the term $-R(\nabla_{\dg}Y, \dg)X$;
in (e), the term $R(X,\nabla_{\dg}Y)\dg.$

For any vectors $Z, W$, the curvature endomorphism $R(Z, W): TM\to TM$ has
$$R(Z,W)^a_b = R_{c d b}^{\ \ \ a}Z^cW^d.$$
  Also, since $(\nabla_{\dg}Y)^a =
\frac{d}{d\theta}Y^a $ plus zeroth order terms,
$\sigma_{1}(\nabla_{\dg}) =
i\xi\cdot {\rm Id}.$
Thus in (a) and (b), 
$\sigma_1(-R(X, \dg) \nabla_{\dg})^a_b = -R_{cdb}^{\ \ \ a}X^c\dg^d\xi.$

For (c), we have $-R(\nabla_{\dg}Y, \dg)X = -R_{cdb}^{\ \ \
  a}(\nabla_{\dg}Y)^c\dg^d X^b\partial_a$, so the top order symbol is
$-R_{cdb}^{\ \ \ a}\xi\dg^dX^b = -R_{bdc}^{\ \ \ a}\xi\dg^d X^c.$

For (e), we have $R(X,\nabla_{\dg}Y)\dg = R_{cdb}^{\ \ \ a}X^c(\nabla_{\dg}Y)^d
\dg^b\partial_a$, so the top order symbol is 
$R_{cdb}^{\ \ \ a}X^c\xi \dg^b = R_{cbd}^{\ \ \ a}X^c\xi \dg^d.$

Since the top order symbol of $\wgti$ is $|\xi|^{-2}$, adding these four terms
finishes the proof. 
\end{proof}
 
 We now compute the top symbols of the curvature tensor.  $\sigma_{-1}(\Omega^1)$ involves
 the covariant derivative of the curvature tensor on $M$, but fortunately this symbol
 will not be needed in \cite{MRT2}.

\begin{lem}\label{old2.2}
(i) 
$\sigma_0(\Omega^1(X,Y))^a_b =  R^M(X,Y)^a_b = R_{cdb}^{\ \ \ a}X^cY^d.$

(ii) \begin{eqnarray*}
\frac{1}{i|\xi|^{-2}\xi}\sigma_{-1}(\Omega^1(X,Y)) &=&
\frac{1}{2}\left(\nabla_X[-2R(Y,\dg) - R(\cdot,\dg)Y +
  R(Y,\cdot)\dg]\right.\\
&&\qquad \left. - \xly \right.\\
&&\qquad\left. - [-2R([X,Y],\dg) -R(\cdot,\dg)[X,Y] + R([X,Y],\cdot)\dg] \right).
\end{eqnarray*}
Equivalently, in Riemannian normal coordinates on $M$ centered at $\gamma(\theta)$,
\begin{eqnarray}\label{moc}
\frac{1}{i|\xi|^{-2}\xi}\sigma_{-1}(\Omega^1(X,Y))^a_b &=& \frac{1}{2}
X[(-2R_{cdb}^{\ \ \ a} -R_{bdc}^{\ \ \ a} + R_{cbd}^{\ \ \
  a})\dg^d]Y^c - (X\leftrightarrow Y)\nonumber\\
&=&\frac{1}{2}
X[-2R_{cdb}^{\ \ \ a} -R_{bdc}^{\ \ \ a} + R_{cbd}^{\ \ \
  a}]\dg^dY^c  -(X\leftrightarrow Y)\\
&&\qquad +
\frac{1}{2}[-2R_{cdb}^{\ \ \ a} -R_{bdc}^{\ \ \ a} + R_{cbd}^{\ \ \
  a}]\dot X^dY^c - (X\leftrightarrow Y)\nonumber
\end{eqnarray}
\end{lem}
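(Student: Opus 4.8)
The plan is to expand the curvature via the structure equation $\Omega^1 = d_{LM}\omega^1 + \omega^1\wedge\omega^1$, where $d_{LM}\omega^1(X,Y) = \delta_X(\omega^1_Y) - \delta_Y(\omega^1_X) - \omega^1_{[X,Y]}$ and $(\omega^1\wedge\omega^1)(X,Y) = \omega^1_X\omega^1_Y - \omega^1_Y\omega^1_X$, and then to read off symbols order by order using Lemma \ref{old2.1}. I would use two structural facts: the loop variation $\delta_X$ preserves $\pdo$ order and acts coefficientwise on symbols (as in the Remark following Lemma \ref{pdo}), so that $\sigma_k(\delta_X\omega^1_Y) = \delta_X(\sigma_k(\omega^1_Y))$; and the composition of $\pdo$s has symbol given by the standard asymptotic formula $\sigma(PQ)\sim\sum_\alpha \tfrac{1}{\alpha!}\partial_\xi^\alpha\sigma(P)\,D_\theta^\alpha\sigma(Q)$. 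The decisive simplification is that the leading symbol $\sigma_0(\omega^1_X)=\omega^M_X$ is the $\xi$-independent Christoffel matrix, so every $\partial_\xi$-correction built from leading symbols drops out.

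For part (i), I would take the order-$0$ component of each piece. Since $\sigma_0(\omega^1_X)=\omega^M_X$ is $\xi$-independent, $\sigma_0(\omega^1_X\omega^1_Y)=\omega^M_X\omega^M_Y$, and collecting gives
\begin{equation*}
\sigma_0(\Omega^1(X,Y)) = \delta_X\omega^M_Y - \delta_Y\omega^M_X - \omega^M_{[X,Y]} + [\omega^M_X,\omega^M_Y].
\end{equation*}
This is exactly the structure equation for the connection form $\omega^M$, i.e.\ the curvature of the $L^2$ Levi-Civita connection $D=\ev_\theta^*\nlc$ of Lemma \ref{lem:l2lc}, which is $R^M(X,Y)^a_b = R_{cdb}^{\ \ \ a}X^cY^d$, proving (i).

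For part (ii), I would extract the order-$(-1)$ component. In $\omega^1_X\omega^1_Y$ the only order-$(-1)$ contributions are the cross terms $\omega^M_X\,\sigma_{-1}(\omega^1_Y) + \sigma_{-1}(\omega^1_X)\,\omega^M_Y$, since the would-be $\partial_\xi\sigma_0\cdot\partial_\theta\sigma_0$ term vanishes by $\xi$-independence of $\sigma_0$. Antisymmetrizing in $X,Y$ and adding the $d_{LM}\omega^1$ contribution, the Christoffel cross terms combine with $\delta_X$ into the bundle covariant derivative $\nabla_X = \delta_X + [\omega^M_X,\,\cdot\,]$ acting on the endomorphism-valued symbol, giving
\begin{equation*}
\sigma_{-1}(\Omega^1(X,Y)) = \nabla_X\sigma_{-1}(\omega^1_Y) - \nabla_Y\sigma_{-1}(\omega^1_X) - \sigma_{-1}(\omega^1_{[X,Y]}).
\end{equation*}
Substituting the value of $\sigma_{-1}(\omega^1_\cdot)$ from Lemma \ref{old2.1}(ii) yields the first, invariant form of (ii).

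For the normal-coordinate form I would use that $\sigma_{-1}(\omega^1_Y)$ is linear in $Y$, so Leibniz splits each $\nabla_X$ into a piece hitting the curvature tensor, a piece hitting $\dg$, and a piece hitting the argument $Y$. The arguments cancel, since $\nabla_X Y - \nabla_Y X - [X,Y] = 0$ by torsion-freeness of the pointwise Levi-Civita connection (Lemma \ref{lem:l2lc}), so the terms differentiating $Y$ contribute nothing. In Riemannian normal coordinates centered at $\gamma(\theta)$ the Christoffel symbols vanish there, so $\nabla_X$ reduces to $\delta_X=X$ on the components of $R$ and $\nabla_X\dg = \delta_X\dg = \dot X$ by (\ref{badterms}); this produces exactly the two displayed lines, the first from differentiating $R$ and the second from $\delta_X\dg=\dot X$. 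The main obstacle is the bookkeeping in the order-$(-1)$ composition step, but the conceptual crux that keeps the answer clean is that the $\xi$-independence of $\omega^M$ annihilates the symbol-product corrections and lets $\delta_X$ together with the commutators assemble into $\nabla_X$, after which torsion-freeness disposes of the remaining argument terms.
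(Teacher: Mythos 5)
Your proposal is correct and follows essentially the same route as the paper: expand $\Omega^1 = d\omega^1 + \omega^1\wedge\omega^1$, extract $\sigma_0$ and $\sigma_{-1}$ via the symbol composition formula (the $\partial_\xi$-correction terms vanishing because $\sigma_0(\omega^1)=\omega^M$ is $\xi$-independent), substitute Lemma \ref{old2.1}(ii), and cancel the terms differentiating the arguments via torsion-freeness of the pointwise connection. The only difference is organizational: you assemble $\delta_X$ and the commutator cross terms into $\nabla_X = \delta_X + [\omega^M_X,\cdot]$ to land on the manifestly covariant formula first and then specialize to normal coordinates, whereas the paper first observes that $\sigma_{-1}(\Omega^1)=\sigma_{-1}(\Omega^1-\Omega^M)$ is coordinate-independent, computes in Riemannian normal coordinates where $\sigma_0(\omega^1)$ vanishes at the center, and covariantizes at the end -- the two orderings are equivalent.
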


\begin{proof} 
(i)
\begin{eqnarray*} \sigma_0(\Omega^1(X,Y))^a_b &=& \sigma_0((d\omega^1 +
  \omega^1\wedge\omega^1)(X,Y))^a_b\\
&=& [(d\sigma_0(\omega^1) + \sigma_0(\omega^1)\wedge\sigma_0(\omega^1))(X,Y)]^a_b\\
&=& [(d\omega^M + \omega^M\wedge\omega^M)(X,Y)]^a_b\\
&=& R^M(X,Y)^a_b = R_{cdb}^{\ \ \ a}X^cY^d.
\end{eqnarray*}

(ii) Since $\sigma_0(\Omega^1_X)$ is independent of $\xi$, after dividing by
$i|\xi|^{-2}\xi$ we have
\begin{eqnarray*}\sigma_{-1}(\Omega^1(X,Y))^a_b &=& (d\sigma_{-1}(\omega^1)
  (X,Y))^a_b + \sigma_0(\omega^1_X)^a_c\sigma_{-1}(\omega^1_Y)^c_b
+ \sigma_{-1}(\omega^1_X)^a_c\sigma_{0}(\omega^1_Y)^c_b\\
&&\qquad
-\sigma_0(\omega^1_Y)^a_c\sigma_{-1}(\omega^1_X)^c_b
+ \sigma_{-1}(\omega^1_Y)^a_c\sigma_{0}(\omega^1_X)^c_b.
\end{eqnarray*}
As an operator on sections of $\gamma^*TM$, 
$\Omega^{LM} - \Omega^M$ has order $-1$  so $\sigma_{-1}(\Omega^{LM})
= \sigma_{-1}(\Omega^{LM} -\Omega^M)$ is independent of coordinates.
In Riemannian normal coordinates at $\gamma(\theta)$, $\sigma_0(\omega_X) = \sigma_0(\omega_Y) = 0$, so
\begin{eqnarray*}\sigma_{-1}(\Omega^1(X,Y))^a_b &=&
  (d\sigma_{-1}(\omega^1)(X,Y))^a_b\\
&=& X(\sigma_{-1}(\omega^1_Y))^a_b - Y(\sigma_{-1}(\omega^1_X))^a_b
  -\sigma_{-1}(\omega^1_{[X.Y]})^a_b\\
&=& \frac{1}{2} X[(-2R_{cdb}^{\ \ \ a} -R_{bdc}^{\ \ \ a} + R_{cbd}^{\ \ \
  a}]Y^c\dg^d] - (X\leftrightarrow Y)\\
&&\qquad -\frac{1}{2}( -2R_{cdb}^{\ \ \ a} -R_{bdc}^{\ \ \ a} + R_{cbd}^{\ \ \
  a}][X,Y]^c\dg^d.
\end{eqnarray*}
The terms involving $X(Y^c) - Y(X^c) - [X,Y]^c$ cancel (as they must, since the symbol two-form 
cannot involve derivatives of $X$ or $Y$).  Thus
$$\sigma_{-1}(\Omega^1(X,Y))^a_b = \frac{1}{2} X[(-2R_{cdb}^{\ \ \ a} 
-R_{bdc}^{\ \ \ a} + R_{cbd}^{\ \ \  a})Y^c\dg^d] - (X\leftrightarrow Y).$$

This gives the first coordinate expression in (\ref{moc}). The second expression follows from
 $X(\dg^d) = \dot X^d $ (see (\ref{badterms})).

To convert from the coordinate expression to the covariant expression, we follow the 
usual procedure of changing ordinary derivatives to covariant derivatives and adding bracket terms.  For example,
\begin{eqnarray*}\nabla_X(R(Y,\dg)) &=& (\nabla_XR)(Y,\dg) + R(\nabla_XY,\dg)
  + R(Y,\nabla_X\dg) \\
&=& X^iR_{cdb\ ;i}^{\ \ \ a}Y^c\dg^d + R(\nabla_XY,\dg) + R_{cdb}^{\ \ \
    a}Y^c(\nabla_X\dg)^d.
\end{eqnarray*}
In Riemannian normal coordinates at
$\gamma(\theta)$, we have $X^iR_{cdb\ ;i}^{\ \ \ a} = X^i\partial_i R_{cdb}^{\
  \ \ a} = X(R_{cdb}^{\ \ \ a})$ and $(\nabla_X\dg)^d =  X(\dg^d).$ 
  Thus 
  $$\nabla_X(R(Y,\dg)) -\xly - R([X,Y],\dg) = X(R_{cdb}^{\ \ \ a}\dg^d)Y^c - \xly.$$
  The other terms are handled similarly.
  \end{proof} 

\subsection{{\bf Connection and Curvature Symbols for General $s$}}
${}$
\medskip

The noteworthy feature of these computations is the linear dependence of $\sigma_{-1}(\omega^{s})$ on $s$.  

Let $g$ be the Riemannian metric on $M$.

\begin{lem}\label{lem:33}
(i) At $\gamma(\theta)$,
$\sigma_0(\omega^s_X)^a_b =  (\omega^M_X)^a_b = \chw{c}{b}{a}X^c.$

(ii) $\sigma_0(\Omega^s(X,Y))^a_b =  R^M(X,Y)^a_b = R_{cdb}^{\ \ \ a}X^cY^d.$

(iii)  $
\frac{1}{i|\xi|^{-2}\xi}\sigma_{-1}(\omega^s_X)^a_b = s T(X,\dg, g)$,
where $T(X, \dg, g)$  is tensorial and independent of $s$. 
\end{lem}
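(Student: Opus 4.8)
The plan is to extract all three symbols directly from the formula for the connection one-form in Theorem~\ref{thm25}(ii),
$$\omega^s_X(Y) = \omega^M(Y) + \frac{1}{2}\wgtsi[D_X,\wgts]Y + \frac{1}{2}\wgtsi[D_Y,\wgts]X - \frac{1}{2}\wgtsi[D_Y,\wgts]X^*,$$
using the structural facts from Lemma~\ref{pdo}: each of the three correction terms is a composition of $\wgtsi$ (order $-2s$, with scalar leading symbol $(\xi^2)^{-s}$) with a $\pdo$ of order $2s-1$ whose principal symbol is linear in $s$.

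Parts (i) and (ii) follow by order counting. Since each correction term has order $-2s+(2s-1)=-1$, the only order-zero contribution to $\omega^s_X$ is $\omega^M_X$, giving $\sigma_0(\omega^s_X)^a_b = (\omega^M_X)^a_b = \chw{c}{b}{a}X^c$. For (ii), $\Omega^s = d_{LM}\omega^s + \omega^s\wedge\omega^s$, and products of order-$(-1)$ symbols have order $\le -2$, so the order-zero symbol of the curvature comes only from $\omega^M$; thus $\sigma_0(\Omega^s) = d\omega^M + \omega^M\wedge\omega^M = R^M$, exactly as in Lemma~\ref{old2.2}(i).

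The content is (iii). I would compute $\sigma_{-1}(\omega^s_X)$ as a product of leading symbols: for each correction term the order-$(-1)$ symbol is $\sigma_{-2s}(\wgtsi)=(\xi^2)^{-s}$ times the principal symbol $\sigma_{2s-1}$ of the order-$(2s-1)$ factor, the lower-order composition corrections contributing only to symbols of order $\le -2$. By the computation underlying Lemma~\ref{pdo}(i), and explicitly by (\ref{tsmo}), each such principal symbol has the form $s\cdot(\text{tensorial in }X,\dg,g)\cdot\xi\,(\xi^2)^{s-1}$, the factor of $s$ arising from the integration by parts in $\lambda$ (and, for the Christoffel part of $D_X$, from $\partial_\xi(\xi^2)^s$). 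Multiplying by $(\xi^2)^{-s}$, the $s$-dependent powers $(\xi^2)^{\pm s}$ cancel, leaving $s\cdot(\text{tensorial})\cdot\xi(\xi^2)^{-1}=s\cdot(\text{tensorial})\cdot|\xi|^{-2}\xi$. Summing the three contributions gives $\sigma_{-1}(\omega^s_X)=s\,i|\xi|^{-2}\xi\,T(X,\dg,g)$, which upon dividing by $i|\xi|^{-2}\xi$ is the claim.

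Two points require care, and the second is the main obstacle. First, the third term involves the formal adjoint $[D_\cdot,\wgts]X^*$; I would note that its principal symbol is the conjugate transpose of the principal symbol of $Z\mapsto[D_Z,\wgts]X$ (Lemma~\ref{pdo}(ii)), and since $s$ is real the explicit linear factor $s$ survives, so this term too is linear in $s$ with the same order-$(-1)$ dependence $|\xi|^{-2}\xi$. Second, to see that $T(X,\dg,g)$ is tensorial and independent of $s$, I would observe that $\omega^s - \omega^M = \omega^s - \omega^0$ is a difference of connection one-forms on $TLM$, hence tensorial, so its leading symbol is coordinate-independent and may be written covariantly in $R$, $\dg$, and $X$ (reducing for $s=1$ to Lemma~\ref{old2.1}(ii)). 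The real work is the bookkeeping that makes the cancellation of the $(\xi^2)^{\pm s}$ powers transparent and confirms that the explicit $s$ in (\ref{tsmo}) is the sole source of $s$-dependence at order $-1$; this is precisely what forces linearity in $s$ with no constant term.
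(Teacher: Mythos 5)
Your proposal is correct and follows essentially the same route as the paper: you read the symbols off from Theorem \ref{thm25}(ii) via Lemma \ref{pdo}, use (\ref{tsmo}) as the source of the explicit factor of $s$ (with the extension to ${\rm Re}(s)>0$ via (\ref{abc}) already encoded in Lemma \ref{pdo}(i)), kill the Christoffel contribution in Riemannian normal coordinates, and handle the third term through $\sigma_{2s-1}([D_\cdot,\wgts]X^*)=\left(\sigma_{2s-1}([D_\cdot,\wgts]X)\right)^*$, exactly as the paper does. The only cosmetic difference is your justification of tensoriality via the difference of connection one-forms $\omega^s-\omega^0$ having coordinate-independent order-$(-1)$ symbol, where the paper instead invokes $\partial_i\Gamma_{\nu j}^{\ell}=\frac{1}{3}(R_{i\nu j}^{\ \ \ \ell}+R_{ij\nu}^{\ \ \ \ell})$ in normal coordinates; both are standard and equivalent.
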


\begin{proof} (i)  By Lemma \ref{pdo}, the only term of order zero in (\ref{223}) 
 is $\omega^M_X.$

(ii)  The proof of Lemma \ref{old2.2}(ii) carries over.  

(iii) By Theorem \ref{thm25}, we have to compute $\sigma_{2s-1}$ for $[D_X,\wgts]$, 
$[D_\cdot,\wgts]X$, and $[D_\cdot,\wgts]X^*$, as 
$\sigma_{-1} (\wgtsi[D_X,\wgts]) =  |\xi|^{-2s}\sigma_{-1}([D_X,\wgts])$, etc.

Write $D_X = \delta_X + \Gamma\cdot X$ in shorthand.  Since $\wgts$ has scalar leading order symbol, $[\Gamma\cdot X,\wgts]$ has order $2s-1.$  Thus
we can compute $\sigma_{2s-1}([\Gamma\cdot X,\wgts])$ in any coordinate system.  
In  Riemannian normal coordinates centered at $\gamma(\theta)$, as in the proof of Lemma \ref{pdo}(ii),  the Christoffel symbols vanish.
Thus $\sigma_{2s-1}([\Gamma\cdot X,\wgts]) =0.$

By (\ref{tsmo}), $\sigma_{2s-1}([\delta_X,\wgts])$ is $s$ times a tensorial expression in $X, \dg, g$,
since $\partial_i\Gamma_{\nu j}^{\ell} = \frac{1}{3}(R_{i\nu j}^{\ \ \ \ell} +
R_{ij\nu }^{\ \ \ \ell})$ in normal coordinates.  The term with $\Gamma$ vanishes, so 
$\sigma_{2s-1}( [D_X,\wgts]) $ is $s$ times this tensorial expression.

The argument for $\sigma_{2s-1}([D_\cdot,\wgts]X$ is similar.  The term
with 
$\Gamma  $ vanishes.  
By (\ref{222}), (\ref{223a}), 
$$\sigma_{2s-1}([\delta_\cdot,\wgts]X)^a_b = 
i\sum_j\partial_t^j\partial_\xi^j|_{t=0, \xi=0} (p(\theta, \xi)^a_e\Gamma_{bc}^e(\gamma-t, \eta +\xi) X^c(\theta-t)).$$
By (\ref{tsmo}), the right hand side is linear in $s$ for Re$(s) <0$.  By (\ref{abc}), this implies 
the linearity in $s$ for Re$(s)>0.$  

Since $\sigma_{2s-1}([D_\cdot,\weight]X^*) = (\sigma_{2s-1}([D_\cdot, \weight]X))^*$, this 
symbol is also linear in $s$.
\end{proof}

\bibliographystyle{amsplain}
\bibliography{Paper3}

\providecommand{\bysame}{\leavevmode\hbox to3em{\hrulefill}\thinspace}
\providecommand{\MR}{\relax\ifhmode\unskip\space\fi MR }
\providecommand{\MRhref}[2]{%
  \href{http://www.ams.org/mathscinet-getitem?mr=#1}{#2}
}
\providecommand{\href}[2]{#2}
\begin{thebibliography}{10}

\bibitem{A-B2}
{Atiyah, M.} and {Bott, R.}, \emph{A {L}efschetz formula for elliptic
  complexes: I}, Annals of Math. \textbf{86} (1967), 374--407.

\bibitem{E}
{Eells, J.}, \emph{A setting for global analysis}, Bull. Amer. Math. Soc.
  \textbf{99} (1966), no.~1, 751--807.

\bibitem{Freed}
{Freed, D.}, \emph{{\rm Geometry of Loop Groups}}, {\it J. Diff. Geom.}
  \textbf{28} (1988), 223--276.

\bibitem{Freed1}
{Freed, D.} and {Uhlenbeck, K.}, \emph{Instantons and {F}our {M}anifolds},
  Springer-Verlag, New York, 1984.

\bibitem{gilkey}
{Gilkey, P. B.}, \emph{Invariance {T}heory, the {H}eat {E}quation, and the
  {A}tiyah-{S}inger {I}ndex {T}heorem}, Publish or Perish, Wilmington, DE,
  1984.

\bibitem{hor}
L.~H\"ormander, \emph{The {A}nalysis of {L}inear {P}artial {D}ifferential
  {O}perators {III}: {P}seudo-{D}ifferential {O}perators}, Springer-Verlag, New
  York, 2007.

\bibitem{lang}
{Lang, S.}, \emph{Differential and {R}iemannian {M}anifolds},
  Springer-{V}erlag, New York, 1995.

\bibitem{andres}
{Larr\'ain-{H}ubach, A.}, \emph{{\rm Explicit computations of the symbols of
  order 0 and -1 of the curvature operator of ${\Omega} {G}$}}, {\it Letters in
  Math. Phys.} \textbf{89} (2009), 265--275.

\bibitem{lrst}
{Larr\'ain-Hubach, A.}, {Rosenberg, S.}, {Scott, S.}, and {Torres-Ardila, F.},
  \emph{{\rm Characteristic classes and zeroth order pseudodifferential
  operators}}, {\it Spectral {T}heory and {G}eometric {A}nalysis}, Contemporary
  Mathematics, {V}ol. 532, AMS, 2011.

\bibitem{MRT2}
{Maeda, Y.}, {Rosenberg, S.}, and {Torres-Ardila, F.}, \emph{The geometry of
  loop spaces {II}: Characteristic classes}, in preparation.

\bibitem{Omori}
{{Omori, H.}}, \emph{Infinite-{D}imensional {L}ie {G}roups}, A.M.S.,
  Providence, RI, 1997.

\bibitem{paycha}
S.~Paycha, \emph{Towards an extension of {C}hern-{W}eil calculus to a class of
  infinite dimensional vector bundles}, Geometric and {T}opological {M}ethods
  for {Q}uantum {F}ield {T}heory, Cambridge Univ. Press, Cambridge, 2013.

\bibitem{See}
{Seeley, R. T.}, \emph{Complex powers of an elliptic operator}, {S}ingular
  {I}ntegrals: Proceedings of {S}ymposia in {P}ure {M}athematics (A.~Calderon,
  ed.), American Mathematical Society, 1967, pp.~288--306.

\bibitem{shu}
{Shubin, M.}, \emph{Pseudodifferential {O}perators and {S}pectral {T}henry},
  Springer-Verlag, Berlin, 2001.

\bibitem{Spi}
{Spivak, M.}, \emph{A {C}omprehensive {I}ntroduction to {D}ifferential
  {G}eometry}, Publish or {P}erish, {I}nc., Wilmington, DE, 1979.

\bibitem{warner}
{Warner, F.}, \emph{Foundations of {D}ifferentiable {M}anifolds and {L}ie
  {G}roups}, Scott, {F}oresman \& {C}o., Glenview, IL, 1971.

\end{thebibliography}

\end{document}